\newtheorem{theorem}{Theorem}[section]
\newtheorem{lemma}[theorem]{Lemma}
\newtheorem{proposition}[theorem]{Proposition}
\newtheorem{corollary}[theorem]{Corollary}
\theoremstyle{definition}
\newtheorem{definition}[theorem]{Definition}
\numberwithin{equation}{section}
\def\dist{{\rm dist}}
\def\N{{\mathbb N}}
\def\K{{\mathbb K}}
\def\R{{\mathbb R}}
\newcommand{\Rea}{\text{Re}\, }
\def\C{{\mathbb C}}
\def\llll{{\longrightarrow}}
\def\sep{{ \ \  }}
\def\seg{{\ \ \ \  \ \  }}
\def\sem{{\ \ \ \ \ \  }}
\DeclareMathOperator{\rea}{Re}
\def\com#1{{``#1''}}
\title[The Bishop-Phelps-Bollob\'{a}s   and approximate  hyperplane
series properties] {The Bishop-Phelps-Bollob\'{a}s and  approximate  \\ hyperplane  series
properties}
\author[M.D. Acosta]{Mar\'{\i}a D. Acosta}
\address{Universidad de Granada, Facultad de Ciencias,
Departamento de An\'{a}lisis Matem\'{a}tico, 18071 Granada, Spain}
\email{dacosta@ugr.es}
\author[M.~Masty{\l}o]{Mieczys{\l}aw  Masty{\l}o}
\address{Faculty of Mathematics and Computer Science,
A. Mickiewicz University, Umultowska 87, 61-614 Pozna\'n, Poland}
\email{mastylo@amu.edu.pl}
\author[M. Soleimani]{Maryam Soleimani-Mourchehkhorti}
\address{Department of Mathematics, University of Isfahan, Isfahan, Iran, 81745-163}
\email{m.soleymanei@sci.ui.ac.ir}
\thanks{The  first  author was  supported  by MTM2015-65020-P, Junta de
Andaluc\'{\i}a P09-FQM--4911 and FQM--185. The second  author was supported by National Science
Center, Poland, project no.  2015/17/B/ST1/00064.}
\subjclass[2010]{Primary 46B20,  Secondary  47B99}
\keywords{Banach space,  Bishop-Phelps-Bollob\'{a}s Theorem,  norm attaining operator,
Bishop-Phelps-Bollob{\'a}s property.}
\begin{document}
\begin{abstract}
We study the Bishop-Phelps-Bollob\'as property for operators between Banach spaces.
Sufficient conditions are given for generalized direct sums of Banach spaces with respect to
a~uniformly monotone Banach sequence lattice to have the approximate hyperplane series property.
This result implies  that Bishop-Phelps-Bollob\'as theorem holds for operators from $\ell_1$ into
such
direct sums of Banach spaces. We also show that the direct sum of two spaces with the approximate hyperplane
series property  has such property whenever the norm of the direct sum is absolute.
\end{abstract}

\maketitle

\baselineskip=.65cm

\section{Introduction}

 The motivation for this paper comes from recent intensive study of the famous
Bishop-Phelps Theorem \cite{BP}, which states that every Banach space is
subreflexive, i.e., the set of norm attaining (continuous and linear) functionals
on a~Banach space is dense in its topological dual.

The first who initiated the study of the denseness of
norm-attaining operators between two Banach spaces was
Lindenstrauss \cite{Lin}. Later a~lot of attention was devoted to
extend Bishop-Phelps result in the setting of operators on Banach
spaces (see, e.g., \cite{AcR, Bou}).

In 1970, Bollob\'as showed the following \com{quantitative
version} which is now called Bishop-Phelps-Bollob\'as Theorem
\cite{Bol}. To state this result we mention that for a~normed
space $X$, we denote by $B_X$ and $S_X$ the closed unit ball and
the unit sphere of $X$, respectively. As usual, $X^{*}$ denotes
the dual Banach space of $X$.

The mentioned above version of the Bishop-Phelps-Bollob\'{a}s Theorem from \cite[Theorem 16.1]{BoDu} states
that if $X$ is a~Banach space and $0<\varepsilon <1$, then given $x \in B_X$ and $x^\ast \in S_{X^\ast}$ with
$\vert 1 - x^\ast(x ) \vert < \varepsilon^{2}/4$, there are elements $y\in S_X $ and $ y^\ast \in S_{X^\ast}$
such that $y^\ast (y)=1$, $\Vert y-x \Vert < \varepsilon$ and $ \Vert y^\ast -x^\ast \Vert < \varepsilon $.

For a refinement of the above result see \cite[Corollary 2.4(a)]{CKMMR}. In 2008 Acosta, Aron, Garc\'{\i}a
and Maestre initiated the study of parallel versions of this result  for operators \cite{AAGM1}. For two
normed spaces $X$ and $Y$ over the scalar field $\mathbb{K}$ ($\R$ or $\C$), $\mathcal{L}(X,Y)$ denotes the
space of (bounded and linear) operators from $X$ into $Y,$ endowed with the usual operator norm.

We recall the following definition from \cite{AAGM1}.

\begin{definition}
Let $X$ and $Y$ be both either real or complex Banach spaces.  It is said that  the pair $(X, Y)$
{\it has the Bishop-Phelps-Bollob\'as property for operators (BPBp)}, if for any $\varepsilon
>0$ there exists $\eta(\varepsilon)>0$ such that for any $T\in S_{\mathcal{L}(X,Y)}$, if $x \in
S_X$ is such that $\|Tx\|>1-\eta (\varepsilon)$, then there exist an element $u $ in $S_X$ and an
operator $S$ in $S_{\mathcal{L}(X,Y)}$ satisfying  the following conditions
$$
\Vert Su \Vert=1, \sem \Vert u - x \Vert < \varepsilon \sem
\mbox{and} \sem \Vert S-T \Vert <\varepsilon.
$$
\end{definition}

During the last years there are a number of  interesting results where it is shown  versions of
Bishop-Phelps-Bollob{\'a}s Theorem for operators (see for instance \cite{ACGM}, \cite{CGK} and
\cite{Kim}). It is known  that the pair  $(X,Y)$ has the BPBp whenever $X$ and $Y$ are finite
dimensional spaces (see \cite[Proposition 2.4]{AAGM1}). If a Banach space $Y$ has the property
$\beta$ of Lindenstrauss, then $(X,Y)$ has the BPBp for every Banach space $X$ (see \cite[Theorem
2.2]{AAGM1}). In the case when $X=\ell_1$ a~characterization of the Banach spaces $Y$ such  that
the pair $(\ell_1,Y)$ has the BPBp was given in \cite[Theorem 4.1]{AAGM1}.

It should be pointed out that very little is known about the stability under direct sums of the property that
a pair of Banach spaces $(X,Y) $ has the Bishop-Phelps-Bollob{\'a}s property for operators. In order to state
some results of this kind  we recall  the following notion used in \cite{AcArGaPa}. Given two Banach spaces
$X$ and $Y$ (both real or complex), we say that $Y$ has {\it property $\mathcal{P}_X$} if the pair $(X, Y)$
has the BPBp for operators.

It  was shown in \cite{ACKLM} that  the pairs $\bigl(X, \bigl(\oplus \sum
_{n=1}^\infty Y_n\bigr)_{c_0}\bigr)$ and $\bigl(X, \bigl( \oplus \sum _{n=1}^\infty
Y_n\bigr) _ {\ell_\infty}\bigr)$ satisfy the Bishop-Phelps-Bollob{\'a}s property for
operators whenever all pairs $(X,Y_n)$ have the Bishop-Phelps-Bollob{\'a}s property for
operators \com{uniformly}. In general the analogous stability result does not hold
for every  Banach sequence lattice $E$ instead of $c_0$. For instance, the subset
of norm attaining operators from any Banach space $X$ into $\ell_p$ ($1 \le p <
\infty$) is not dense in the space of operators from $X$ into $\ell_p$ (\cite{Go,
Ac-l}) for every Banach space $X$. Indeed it is a longstanding open question if for
every (real) Banach space $X$, the subset of norm attaining operators from $X$ into
the euclidean space $\R^2$ is dense in the corresponding space of operators.
However, it is also known that $\mathcal{P}_{\ell_1}$ is stable under finite
$\ell_p$-sums for $1 \le p \le \infty $ (see \cite[Corollary 2.8]{AcArGaPa}).

In this paper we provide two nontrivial extensions of  the above stability  results.  On one hand we prove
that the property $\mathcal{P}_{\ell_1}$ is stable under absolute summands (Theorem \ref{th-estable}). This
extends the above  mentioned result for  finite $\ell_p$-sums. We also prove under mild additional
assumptions, that the property $\mathcal{P}_{\ell_1}$ is stable under  $E$-sums, being $E$ a uniformly
monotone Banach sequence lattice (Theorem \ref{th-AHSP-sequence}).
As a consequence we deduce, for instance,  that if $\{X_k:\, k \in \N \}$ is a sequence of spaces such that
$X_k$ is either some $C(K)$ or $L_1 (\mu)$ or a Hilbert space, then the pair $\bigl( \ell_1, \bigl( \sum
_{k=1}^\infty X_k \bigr) _{\ell_p} \bigr) $ has the BPBp for operators (Corollary \ref{cor-sums}).

On the other hand, in case that the range   is a Hilbert space, we also prove some optimal stability result
of BPBp under $\ell_1$-sums on the domain (Proposition \ref{prop-sum-1-BPBP}). This  result  extends
\cite[Proposition 9]{KLM-2015}, where the authors show the  above result for the $\ell_1$-sum of copies of
the same space.

As we already mentioned  there is a~characterization of the Banach spaces $Y$ such that the pair
$(\ell_1, Y)$ has the Bishop-Phelps-Bollob{\'a}s property for operators \cite{AAGM1}. The property on
$Y$ equivalent to the previous fact was called the AHSp (Approximate  hyperplane series
property).

We will need the following definition, where  in what follows by a convex series we mean a series
$\sum \alpha_n$, where $0 \le \alpha_n\le 1$ for each $n \in \N$ and $\sum _{n=1}^\infty \alpha
_n =1$.

\begin{definition}
\label{def-AHSP}
 A Banach space $X$  has the {\it approximate hyperplane series property} (AHSp) if and only if
for every $0 < \varepsilon < 1$ there exists  $0 < \eta < \varepsilon $ such that for every
sequence $\{x_n\} $ in $S_X$ and every convex series $\sum  \alpha _n$ with
$$
\biggl \Vert \sum_{n=1}^\infty \alpha_n x_n \biggr \Vert  > 1-\eta ,
$$
there exist a subset $A\subset \N$ and a subset $\{z_k : k \in A\}\subset S_X$    satisfying
\begin{enumerate}
\item
 $ \ \sum_{k\in A}\alpha_k>1- \varepsilon, $
\item
$  \Vert z_k-x_k  \Vert  <\varepsilon \ \ \text{\ for \ all } k\in A$  \ \ {\text and}
\item
there is $x^\ast \in S_{X^\ast}$ such that $x^\ast(z_k)=1$ for every  $k\in A.$
\end{enumerate}
\end{definition}

We will use the following characterization of the AHSp (see \cite[Proposition 1.2]{AcArGaPa}.)

\begin{proposition}
\label{char-AHSP}
Let $X$ be a Banach space. The following conditions are equivalent{\rm:}
\begin{itemize}
\item[{(a)}] $X$ has the AHSp.
\item[{(b)}] For every $0 < \varepsilon < 1$ there exist $\gamma_X\left(\varepsilon\right)>0$ and $\eta_X
(\varepsilon )>0$ with $\lim_{\varepsilon \to 0}\gamma_X (\varepsilon )=0$ such that for every sequence $\{
x_n \}$ in $ B_X$ and every convex series $\sum_{n} \alpha_n$ with $ \displaystyle{\biggl \Vert
\sum_{k=1}^{\infty}\alpha_kx_k \biggr \Vert > 1-\eta_X (\varepsilon ),} $ there are a subset $A \subseteq \N
$ with $\sum_{k\in A}\alpha_k
>1-\gamma_X (\varepsilon )$, an element $x^* \in  S_{X^*}$, and
$ \{ z_k: k \in A\} \subseteq \bigl ( x^*\bigr)^{-1} (1) \cap B_X$ such that $\Vert z_k-x_k \Vert
<\varepsilon$ for all $k\in A.$
\item[{(c)}] For every $0< \varepsilon < 1$ there exists $0 <
\eta < \varepsilon$ such that for any sequence $\{ x_n \}$ in $ B_X$ and every convex series $\sum_{n}
\alpha_n$ with $ \displaystyle{\biggl \Vert \sum_{k=1}^{\infty}\alpha_kx_k \biggr \Vert > 1-\eta,}$ there are
a subset $A \subset \N $ with $\sum_{k\in A}\alpha_k
>1-\varepsilon $, an element $x^* \in  S_{X^*}$, and
$ \{ z_k: k \in A\} \subseteq \bigl ( x^*\bigr)^{-1} (1) \cap B_X$ such that $\Vert z_k-x_k \Vert
<\varepsilon$ for all $k\in A.$
\item[{(d)}]  The same statement holds as in $(c)$ but for every
sequence $\{ x_n\}$ in $S_X$.
\end{itemize}
\end{proposition}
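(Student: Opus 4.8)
The plan is to prove the cycle $(a)\Rightarrow(b)\Rightarrow(c)\Rightarrow(d)\Rightarrow(a)$, in which three steps are purely formal and only $(a)\Rightarrow(b)$ carries content. For $(d)\Rightarrow(a)$: if $x^\ast\in S_{X^\ast}$ and $z\in(x^\ast)^{-1}(1)\cap B_X$, then $1=|x^\ast(z)|\le\|z\|\le1$, so $z\in S_X$ with $x^\ast(z)=1$; since the hypotheses of $(a)$ and $(d)$ coincide (sequences in $S_X$ with $\|\sum_n\alpha_nx_n\|>1-\eta$), the conclusion of $(d)$ is verbatim that of $(a)$. For $(c)\Rightarrow(d)$: this is immediate from $S_X\subseteq B_X$, as $(d)$ is just $(c)$ applied to sequences lying in $S_X$. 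For $(b)\Rightarrow(c)$: given $0<\varepsilon<1$, use $\lim_{t\to0}\gamma_X(t)=0$ to pick $t_0\in(0,\varepsilon)$ with $\gamma_X(t_0)<\varepsilon$ and set $\eta:=\min\{\eta_X(t_0),\varepsilon/2\}\in(0,\varepsilon)$; then for every $\{x_n\}\subseteq B_X$ and every convex series $\sum\alpha_n$ with $\|\sum_k\alpha_kx_k\|>1-\eta\ge1-\eta_X(t_0)$, condition $(b)$ with parameter $t_0$ yields a set $A$ with $\sum_{k\in A}\alpha_k>1-\gamma_X(t_0)>1-\varepsilon$, an $x^\ast\in S_{X^\ast}$, and $\{z_k:k\in A\}\subseteq(x^\ast)^{-1}(1)\cap B_X$ with $\|z_k-x_k\|<t_0<\varepsilon$, which is exactly $(c)$.

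The substantive step is $(a)\Rightarrow(b)$, which is a \emph{normalization} argument: the AHSp (condition $(a)$) only controls sequences lying on $S_X$, whereas $(b)$ must handle sequences in $B_X$ that may contain vectors of small or zero norm. Assume $X$ has the AHSp with modulus $\eta_0(\cdot)$, fix $0<\varepsilon<1$, and let $\{x_n\}\subseteq B_X$ and a convex series $\sum\alpha_n$ satisfy $\|\sum_n\alpha_nx_n\|>1-\eta$, where $\eta=\eta_X(\varepsilon)$ will be chosen later. Put $B=\{n:x_n\neq0\}$ and $\lambda=\sum_n\alpha_n\|x_n\|$, so $1-\eta<\lambda\le1$ and $B\neq\emptyset$. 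For $n\in B$ set $u_n=x_n/\|x_n\|\in S_X$ and $\beta_n=\alpha_n\|x_n\|/\lambda$, and for $n\notin B$ choose an arbitrary $u_n\in S_X$ and set $\beta_n=0$; then $\{u_n\}\subseteq S_X$, $\sum_n\beta_n=1$, and $\sum_n\beta_nu_n=\tfrac{1}{\lambda}\sum_n\alpha_nx_n$, so $\|\sum_n\beta_nu_n\|=\tfrac{1}{\lambda}\|\sum_n\alpha_nx_n\|\ge\|\sum_n\alpha_nx_n\|>1-\eta$.

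Fix auxiliary parameters $\varepsilon'>0$ and a cutoff $\delta>0$ with $\varepsilon'+\delta\le\varepsilon$, and require $\eta\le\eta_0(\varepsilon')$. Applying the AHSp to $\{u_n\}$ and $\{\beta_n\}$ gives $A\subseteq\N$ (which we may take inside $B$, since $\beta_n=0$ off $B$), some $x^\ast\in S_{X^\ast}$, and $\{z_k:k\in A\}\subseteq S_X$ with $x^\ast(z_k)=1$, $\|z_k-u_k\|<\varepsilon'$, and $\sum_{k\in A}\beta_k>1-\varepsilon'$, i.e.\ $\sum_{k\in A}\alpha_k\|x_k\|>\lambda(1-\varepsilon')$. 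Let $A'=\{k\in A:\|x_k\|>1-\delta\}$. For $k\in A'$ one has $\|u_k-x_k\|=1-\|x_k\|<\delta$, hence $\|z_k-x_k\|<\varepsilon'+\delta\le\varepsilon$, so $x^\ast$ together with $\{z_k:k\in A'\}\subseteq(x^\ast)^{-1}(1)\cap B_X$ satisfies all requirements of $(b)$ except possibly the size of $\sum_{k\in A'}\alpha_k$. Writing $p=\sum_{k\in A'}\alpha_k$ and $q=\sum_{k\in A\setminus A'}\alpha_k$, and using $\|x_k\|\le1$ on $A'$ and $\|x_k\|\le1-\delta$ on $A\setminus A'$, we get $\lambda(1-\varepsilon')<p+(1-\delta)q\le1-\delta q$, so $q<(1-\lambda(1-\varepsilon'))/\delta$ and then $p>\lambda(1-\varepsilon')-q>\lambda(1-\varepsilon')-(1-\lambda(1-\varepsilon'))/\delta$; since $\lambda>1-\eta$ and $t\mapsto t-(1-t)/\delta$ is increasing, $p>1-(\eta+\varepsilon')(1+1/\delta)$. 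It now suffices to take, for instance, $\delta=\varepsilon/2$, then $\varepsilon'=\varepsilon^2/8$ (so $\varepsilon'+\delta<\varepsilon$ and $\varepsilon'$ is small compared with $\delta$), and finally $\eta=\eta_X(\varepsilon):=\min\{\eta_0(\varepsilon^2/8),\varepsilon^2/8\}>0$; then $(\eta+\varepsilon')(1+1/\delta)\le\varepsilon^2/4+\varepsilon/2=:\gamma_X(\varepsilon)$, which tends to $0$ as $\varepsilon\to0$, and $p>1-\gamma_X(\varepsilon)$. This gives $(b)$ (and, since $\gamma_X(\varepsilon)<\varepsilon$ here, it even gives $(c)$ directly).

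The main obstacle is exactly this last estimate: normalization costs convex weight twice — once in the AHSp step (passing from the series $\{\beta_n\}$ to the set $A$) and once when the indices with $\|x_k\|\le1-\delta$ are discarded — and one must verify that choosing $\varepsilon'$ small relative to the cutoff $\delta$ keeps the surviving weight $\sum_{k\in A'}\alpha_k$ within $\gamma_X(\varepsilon)\to0$ of $1$, while still respecting $\varepsilon'+\delta\le\varepsilon$ so that the perturbation bound $\|z_k-x_k\|<\varepsilon$ survives. Everything else is bookkeeping.
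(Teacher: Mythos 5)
Your proof is correct. Note that the paper itself does not prove this proposition but quotes it from \cite[Proposition 1.2]{AcArGaPa}, so there is no in-paper argument to compare against; your cycle $(a)\Rightarrow(b)\Rightarrow(c)\Rightarrow(d)\Rightarrow(a)$, with the only substantive step being the normalization argument for $(a)\Rightarrow(b)$ (rescaling to $u_n=x_n/\Vert x_n\Vert$, reweighting by $\beta_n=\alpha_n\Vert x_n\Vert/\lambda$, then discarding the indices with $\Vert x_k\Vert\le 1-\delta$ and checking the lost convex weight is at most $(\eta+\varepsilon')(1+1/\delta)$), is exactly the standard route, and your parameter choices $\delta=\varepsilon/2$, $\varepsilon'=\varepsilon^2/8$, $\eta_X(\varepsilon)=\min\{\eta_0(\varepsilon^2/8),\varepsilon^2/8\}$, $\gamma_X(\varepsilon)=\varepsilon^2/4+\varepsilon/2$ do verify all the required conditions, including $\gamma_X(\varepsilon)\to 0$ and the automatic observation that $(x^*)^{-1}(1)\cap B_X\subset S_X$ used in $(d)\Rightarrow(a)$.
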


\section{The main results}

In the section we study the Bishop-Phelps-Bollob\'{a}s property for operators between special
types of Banach spaces. In particular we are interested in stability of  this property  when the
domain is an $\ell_1$ sum of Banach spaces.  Throughout the paper we consider either real or
complex Banach spaces.

We will need the following lemma (see \cite[Lemma 3.3]{AAGM1}).

\begin{lemma}
\label{elemental} Let  $\{c_n\}$ be a sequence  of complex numbers
with $\vert c_n \vert \le 1$ for each $n$ and let $\eta
> 0$ be such that there is some sequence $\{ \alpha _n \}$ of nonnegative real
numbers satisfying $\sum_{n=1}^\infty \alpha_n \le 1$ and \ $ \rea\sum_{
n=1}^{\infty} \alpha _n c_n > 1 - \eta $. Then for every $ 0 < r < 1,$ the set $A:=
\{ i\in {\mathbb{N}}: \rea c_i
> r\}$, satisfies the estimate
$$
\sum _{ i \in A} \alpha _i > 1 - \frac{\eta}{1-r}.
$$
\end{lemma}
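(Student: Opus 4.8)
The plan is entirely elementary: split the series defining $\rea\sum_n\alpha_n c_n$ according to whether the index lies in $A$ or in its complement, bound each part by the best available estimate on $\rea c_i$, and then solve one linear inequality for the mass $\sum_{i\in A}\alpha_i$.

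First I would write
\[
1-\eta < \rea\sum_{n=1}^{\infty}\alpha_n c_n
= \sum_{i\in A}\alpha_i\,\rea c_i + \sum_{i\notin A}\alpha_i\,\rea c_i .
\]
For the indices in $A$ the crude bound $\rea c_i\le \abs{c_i}\le 1$ is enough, while for $i\notin A$ the definition of $A$ gives $\rea c_i\le r$. Since every $\alpha_i$ is nonnegative, these pass through after multiplication by $\alpha_i$ (and there is nothing to worry about when $\rea c_i<0$, as then $\alpha_i\,\rea c_i\le 0\le r\alpha_i$ because $r>0$). Putting $s:=\sum_{i\in A}\alpha_i$ and using the hypothesis $\sum_n\alpha_n\le 1$ to get $\sum_{i\notin A}\alpha_i\le 1-s$, I obtain
\[
1-\eta < s + r\sum_{i\notin A}\alpha_i \le s + r(1-s) = r+(1-r)s .
\]

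It then remains only to rearrange: since $0<r<1$ we may divide by $1-r$ to get
\[
s > \frac{1-r-\eta}{1-r} = 1-\frac{\eta}{1-r},
\]
which is exactly the asserted estimate. I do not expect any genuine obstacle here; the only points that deserve a moment's attention are that the hypothesis supplies merely the inequality $\sum_n\alpha_n\le 1$ (so the complementary mass must be bounded by $1-s$ rather than computed exactly), and that the termwise estimates are legitimate precisely because the $\alpha_i$ are nonnegative and $r$ is positive. When $\eta\ge 1-r$ the right-hand side is nonpositive and the conclusion holds trivially, and even in the borderline case $\eta=1-r$ the displayed chain still forces $s>0$, so the statement remains correct.
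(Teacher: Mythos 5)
Your argument is correct: the split of the sum over $A$ and its complement, the termwise bounds $\rea c_i\le 1$ on $A$ and $\rea c_i\le r$ off $A$ (valid since $\alpha_i\ge 0$ and $r>0$), and the final rearrangement using $\sum_{i\notin A}\alpha_i\le 1-s$ give exactly the stated estimate. The paper does not reproduce a proof but quotes the lemma from \cite[Lemma 3.3]{AAGM1}, whose proof is this same elementary splitting argument, so your proposal coincides with the intended one.
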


We also need the following technical lemma. For the sake of
completeness we include a~proof.

\begin{lemma}
\label{iso-hilb}
Let $H$ be a~real or complex  Hilbert space and assume that $u, v \in S_H$. Then
there is a~surjective linear isometry $\Phi $ on $H$ such that $\Phi(u)=v$ and
$\Vert \Phi - I \Vert  = \Vert u - v \Vert$.
\end{lemma}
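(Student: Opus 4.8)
The plan is to reduce everything to a two-dimensional computation, splitting according to whether $u$ and $v$ are linearly dependent over the scalar field $\mathbb{K}$. If $v=\lambda u$ for some $\lambda\in\mathbb{K}$ with $|\lambda|=1$ (this covers $u=v$, and in the real case also $v=-u$), I would simply take $\Phi:=\lambda P+(I-P)$, where $P$ is the orthogonal projection of $H$ onto $\mathbb{K}u$. Then $\Phi$ is clearly a surjective linear isometry with $\Phi u=\lambda u=v$, and since $\Phi-I=(\lambda-1)P$ with $\|P\|=1$, we get $\|\Phi-I\|=|\lambda-1|=\|u-v\|$.

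So the real content is the case where $u,v$ are linearly independent. Here I would work inside the two-dimensional subspace $V:=\operatorname{span}_{\mathbb{K}}\{u,v\}$, writing $H=V\oplus V^{\perp}$. Put $e_1:=u$ and $c:=\langle v,u\rangle$, so $|c|<1$; set $e_2:=(v-cu)/\|v-cu\|$, which together with $e_1$ is an orthonormal basis of $V$, and note that by the Pythagorean theorem $\|v-cu\|=s$ with $s:=\sqrt{1-|c|^{2}}>0$, so that $v=ce_1+se_2$. In this basis, define $U\in\mathcal{L}(V)$ by the ``rotation-type'' matrix $U:=\left(\begin{smallmatrix} c & -s\\ s & \bar c\end{smallmatrix}\right)$. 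A direct check shows that its two columns are orthonormal, so $U$ is a surjective linear isometry of $V$, and $Ue_1=ce_1+se_2=v$. Finally set $\Phi:=U\oplus I_{V^{\perp}}$; this is a surjective linear isometry of $H$ with $\Phi u=v$.

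It then remains to compute $\|\Phi-I\|$. I would use the $C^{*}$-identity $\|\Phi-I\|^{2}=\|(\Phi-I)^{*}(\Phi-I)\|$ together with $\Phi^{*}\Phi=I$ to obtain $\|\Phi-I\|^{2}=\|2I-\Phi-\Phi^{*}\|$. The operator $2I-\Phi-\Phi^{*}$ vanishes on $V^{\perp}$, while on $V$ one computes $U+U^{*}=\left(\begin{smallmatrix} c+\bar c & 0\\ 0 & c+\bar c\end{smallmatrix}\right)=2(\operatorname{Re}c)\,I_V$, hence $2I_V-U-U^{*}=(2-2\operatorname{Re}c)I_V$. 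Since $\operatorname{Re}c\le|c|<1$ this is a nonnegative scalar, so $\|\Phi-I\|^{2}=2-2\operatorname{Re}c$; and on the other hand $\|u-v\|^{2}=2-2\operatorname{Re}\langle u,v\rangle=2-2\operatorname{Re}c$, which gives $\|\Phi-I\|=\|u-v\|$.

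The step I expect to be the crux is selecting the matrix $U$ correctly. There is a whole circle of isometries of $V$ sending $u$ to $v$ (and even in the real two-dimensional case both a rotation and a reflection do so), but a careless choice such as the reflection swapping $u$ and $v$ has $\|\Phi-I\|=2$, much larger than $\|u-v\|$. What makes $\left(\begin{smallmatrix} c & -s\\ s & \bar c\end{smallmatrix}\right)$ work is precisely that its self-adjoint part $U+U^{*}$ is a scalar multiple of the identity; equivalently, both eigenvalues of $U$ equal $\operatorname{Re}c\pm i\sqrt{1-(\operatorname{Re}c)^{2}}$, with common real part $\operatorname{Re}c$, so that $\Phi-I$ acts conformally on $V$ and its norm is pinned to exactly $\|u-v\|$. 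Once this choice is identified, everything else is routine verification.
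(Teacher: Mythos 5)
Your proof is correct and follows essentially the same route as the paper: reduce to the two-dimensional span of $u$ and $v$, construct there a rotation-type isometry sending $u$ to $v$ whose difference from the identity is a scalar multiple of an isometry, and extend by the identity on the orthogonal complement. The only cosmetic differences are that you verify $\Vert \Phi - I\Vert = \Vert u-v\Vert$ via the $C^*$-identity and the fact that $U+U^*$ is a scalar, whereas the paper checks directly that $\Phi-I$ maps an orthonormal basis of the plane to an orthogonal pair of vectors of norm $\Vert u-v\Vert$, and that you treat the linearly dependent case separately while the paper's construction handles it uniformly.
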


\begin{proof}
The result is  obvious in the case $\text{dim}\,H =1$. Assume  that $\text{dim}\,H \ge 2$.  Thus
there is an element $v^\perp \in S_H$ orthogonal to $v$ and such that $[u,v] \subset [v,
v^\perp]$, where $[x,y]$ is the linear span of the vectors $x $ and $y$ in $H$. Let $u_1, u_2 \in
\mathbb{K}$ such that $u= u_1 v + u_2 v^\bot$ and write $u^\bot =- \overline{u_2} v +
\overline{u_1} v^\bot$. It is clearly satisfied that
$$
1= \Vert u \Vert ^2 =  \vert u_1 \vert ^2 +  \vert u_2 \vert ^2
\seg \text{and} \seg \langle u, u^\perp \rangle = 0.
$$

Let $M$ be a subspace of $H$ orthogonal to $[v, v^\perp]=[u, u^\perp]$ and such
that $H=[u, u^\perp] \oplus M$.  Define the mapping $\Phi: H \llll H$ given by
$$
\Phi (zu+wu^\perp +m)=  zv+wv^\perp +m, \seg \forall (z,w) \in \mathbb{K}^2, m \in
M,
$$
which is a surjective linear isometry on $H$. It clearly  satisfies $\Phi (u)=v$
and $\Phi (u^\perp)=v^\perp$.

Clearly  $(\Phi -I) (u)= v-u,  (\Phi -I) (u^\perp)=
v^\perp-u^\perp$ and $\Vert u-v\Vert = \Vert u^\perp - v^\perp
\Vert$. Also we have that
$$
\langle v-u, v^\perp-u^\perp \rangle =  - \bigl( \langle v,
u^\perp \rangle + \langle u, v^\perp \rangle\bigr) = 0.
$$
Hence $\Phi -I$ restricted to  $[u,u^\perp]$  is a multiple of a
linear isometry from this subspace into itself. As a~consequence
$\Vert \Phi - I \Vert = \Vert u-v \Vert$.
\end{proof}

 The next result uses the argument outlined in \cite[Proposition 9]{KLM-2015} in
the case that the domain is the $\ell_1$-sum of one space.

\begin{proposition}
\label{prop-sum-1-BPBP}
 Assume that  $\{X_i: i \in I\}$ is a
family of Banach spaces, $H$ is a Hilbert space such that the pair $(X_i, H)$ has the BPBp for operators for
every $i \in I$ and with the same function $\eta$. Then the pair $\bigl( \bigl( \oplus \sum _{i \in I} X_i
\bigr) _{\ell_1}, H \bigr) $ has the BPBp.
\end{proposition}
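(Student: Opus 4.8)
The plan is to unwind the definition of the BPBp for the $\ell_1$-sum by using the special structure of $\ell_1$: an operator $T$ from $\bigl(\bigoplus_{i\in I}X_i\bigr)_{\ell_1}$ into $H$ is exactly determined by the family of its restrictions $T_i := T|_{X_i}\in\mathcal{L}(X_i,H)$, with $\|T\|=\sup_i\|T_i\|$. Likewise, a point $x$ in the unit sphere of the $\ell_1$-sum is a family $(x_i)_{i\in I}$ with $\sum_i\|x_i\|=1$; writing $\lambda_i:=\|x_i\|$ and $x_i=\lambda_i\,\hat x_i$ with $\hat x_i\in S_{X_i}$ (when $\lambda_i\neq0$), the condition $\|Tx\|>1-\eta(\varepsilon)$ says $\bigl\|\sum_i\lambda_i T_i\hat x_i\bigr\|>1-\eta(\varepsilon)$. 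So the given data is essentially a convex combination of vectors $T_i\hat x_i\in B_H$ whose norm is close to $1$, together with, for each $i$, an operator $T_i$ of norm $\le1$ that moves $\hat x_i$ close to where we want.

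First I would fix $\varepsilon>0$, choose an auxiliary small parameter, and apply Lemma~\ref{elemental} (with $c_i=\langle T_i\hat x_i, h_0\rangle$ for a suitable norm-one functional $h_0\in S_H$ with $\Rea\, h_0\bigl(\sum_i\lambda_i T_i\hat x_i\bigr)$ close to $1$) to extract a set $A\subset I$ carrying almost all of the mass $\sum_i\lambda_i$ and on which $\Rea\langle T_i\hat x_i,h_0\rangle$ is close to $1$; in particular $\|T_i\hat x_i\|$ is close to $1$, so $\|T_i\|$ is close to $1$, for $i\in A$. For each such $i$, after rescaling $T_i$ slightly to have norm exactly $1$, the pair-BPBp hypothesis for $(X_i,H)$ with the common function $\eta$ gives $u_i\in S_{X_i}$ and $S_i\in S_{\mathcal{L}(X_i,H)}$ with $\|S_iu_i\|=1$, $\|u_i-\hat x_i\|<\varepsilon'$ and $\|S_i-T_i\|<\varepsilon'$. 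The unit vectors $S_iu_i\in S_H$ should, after the Lemma~\ref{elemental} step, all be close to the single direction $h_0$; now invoke Lemma~\ref{iso-hilb} to produce surjective linear isometries $\Phi_i$ of $H$ with $\Phi_i(S_iu_i)=h_0$ and $\|\Phi_i-I\|=\|S_iu_i-h_0\|$ small. Replacing $S_i$ by $\Phi_i S_i$ we may assume all the $S_i$ send $u_i$ to the \emph{same} vector $h_0$, at the cost of an additional small perturbation to the operator.

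Then I would assemble the output operator $S$ on the whole $\ell_1$-sum: set $S|_{X_i}=\Phi_iS_i$ for $i\in A$ and $S|_{X_i}=T_i$ (or $0$, whichever keeps norm control cleanest) for $i\notin A$; then $\|S\|\le1$ and $\|S-T\|<\varepsilon$ provided $\varepsilon'$ and the other parameters were chosen small enough. For the point, let $u$ be the family with $u_i=(\lambda_i/\sum_{j\in A}\lambda_j)\,u_i$ for $i\in A$ and $u_i=0$ otherwise; this lies in $S_{(\bigoplus X_i)_{\ell_1}}$, and $\|u-x\|$ is controlled by $2(1-\sum_{i\in A}\lambda_i)+\sum_{i\in A}\lambda_i\|u_i-\hat x_i\|<\varepsilon$. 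Finally $Su=\sum_{i\in A}\mu_i\,\Phi_iS_iu_i=\bigl(\sum_{i\in A}\mu_i\bigr)h_0=h_0$ where $\mu_i=\lambda_i/\sum_{j\in A}\lambda_j$, so $\|Su\|=1$, as required.

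The main obstacle is the bookkeeping of the several small parameters and, more substantively, the step that forces all the vectors $S_iu_i$ to a common direction: one needs that after the Lemma~\ref{elemental} extraction the vectors $T_i\hat x_i$ (and hence, after the pair-BPBp perturbation, the vectors $S_iu_i$) are genuinely close to a single unit vector $h_0$ in \emph{norm}, not merely in the value of one functional — this is where the Hilbert space structure is essential, since in $H$ a unit vector whose inner product with $h_0$ is near $1$ is automatically near $h_0$ in norm (by the parallelogram/expansion $\|v-h_0\|^2=2-2\Rea\langle v,h_0\rangle$), and this is exactly what lets Lemma~\ref{iso-hilb} be applied with small $\|\Phi_i-I\|$. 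Choosing the intermediate thresholds ($r$ in Lemma~\ref{elemental}, the level of $\eta$, and $\varepsilon'$) so that all the accumulated errors — the mass lost off $A$, the isometry perturbations, and the pair-BPBp perturbations — sum to less than $\varepsilon$ is routine but must be done in the right order, first fixing how close $h_0$-alignment must be, then $\varepsilon'$, then $\eta$.
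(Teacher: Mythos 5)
Your proposal is correct and follows essentially the same route as the paper's proof: select via a norming functional of $T(z_0)$ the set of indices carrying almost all of the $\ell_1$-mass, apply the componentwise BPBp there, use Lemma \ref{iso-hilb} to rotate the attained unit vectors $S_i u_i$ onto a common unit vector, and assemble the new operator and point with the same error bookkeeping. The only cosmetic differences are that you align everything with the Riesz vector $h_0$ using the identity $\Vert v-h_0\Vert^2=2-2\,\Rea\langle v,h_0\rangle$, whereas the paper aligns with $y_0=S_{i_0}(x_{i_0})$ after deducing mutual closeness from the modulus of convexity $\delta_H$, and you quote Lemma \ref{elemental} where the paper performs the corresponding mass estimate directly.
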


\begin{proof} We write $Z=\bigl( \oplus \sum _{i \in I} X_i \bigr) _{\ell_1}$. Given $0 < \varepsilon < 1$, we choose
positive real numbers  $r,s$ and $t$ such that
\begin{equation}
\label{choice-epsilons}
r  < \dfrac{ \varepsilon}{4}, \sem   s < \min \Bigl\{ \dfrac{ \varepsilon}{4}, \dfrac{ \delta_H (r)}{3}
\Bigr\} \sem \text{and} \sem t <  \min \Bigl\{ \dfrac{ \varepsilon}{4},  \eta (s), \dfrac{ \delta_H (r)}{3}
\Bigr\},
\end{equation}
where $\delta_H $ is the modulus of convexity of $H$.

Assume that $z_0 = \{ z_0(i)\} \in S_Z$ and  $T \in S_{\mathcal{L}
(Z,H)}$ satisfies $\Vert T(z_0) \Vert > 1 -t^2$.  For every $i \in
I$, we denote by $T_i$ the restriction of $T$ to $X_i$, that is
embedded in $Z$ in a natural way. Assume that $y^* \in S_{ H^*}$
satisfies that $\Rea y^* (T (z_0)) = \Vert T(z_0) \Vert > 1 - t^2
$.

Denote by $B=\{ i \in I : \Rea y^* (T_i (z_0(i))) > (1- t) \Vert z_0(i) \Vert \}$.
We clearly have that
\begin{eqnarray*}
1-t^2& < &  \Rea y^* (T(z_0)) = \sum _{ i \in I}  \Rea y^* (T_i(z_0(i)))  \\
& = &  \sum _{ i \in B}  \Rea y^* (T_i(z_0(i)))  + \sum _{ i \in I\backslash B }  \Rea y^* (T_i(z_0(i)))  \\
&\le&  \sum _{ i \in B}  \Vert z_0(i) \Vert + \sum _{ i \in I\backslash B }  (1-t) \Vert z_0(i) \Vert   \\
&=&  1-t \sum _{ i \in I\backslash B }   \Vert z_0(i) \Vert .
\end{eqnarray*}
Hence

\begin{equation}
\label{sum-B} \sum _{ i  \in I\backslash B} \Vert z_0 (i) \Vert
\le t.
\end{equation}
By assumption, for every $i \in B$ there is an operator $S_i \in
S_{\mathcal{L} (X_i, H)}$ and an element $x_i \in S_{X_i}$ such
that
\begin{equation}
\label{Xi-BPBP} \Bigl \Vert S_i - \frac{T_i}{ \Vert T_i \Vert }
\Bigr \Vert < s, \sem \Bigl \Vert x_i - \frac{z_0(i)}{ \Vert z_0(i) \Vert } \Bigr \Vert < s \sem \text{and}
\sem \Vert S_i (x_i) \Vert =1, \seg  \forall i \in B.
\end{equation}

It follows by \eqref{Xi-BPBP} that for every $i, j\in B$ we have that
\begin{eqnarray*}
\Vert S_i (x_i) +  S_j (x_j) \Vert& \ge &  \biggl\Vert  \frac{S_i (z_0(i))}{\Vert
z_0(i)\Vert } + \frac{S_j (z_0(j))}{\Vert z_0(j)\Vert } \biggr \Vert - 2 s\\
& \ge &  \biggl\Vert  \frac{T_i (z_0(i))}{\Vert T_i \Vert \Vert z_0(i) \Vert  } +
 \frac{T_j (z_0(j))}{\Vert T_j \Vert \Vert z_0(j) \Vert  } \biggr \Vert - 4 s\\
&\ge& 2 \bigl( 1- t \bigr) -4s\\
&>& 2 \bigl( 1 - \delta_H (r)\bigr).
\end{eqnarray*}
As a consequence $\Vert S_i (x_i) - S_j (x_j) \Vert \le r $ for each $i,j \in B$.

Since $B \ne \varnothing$, we choose some element $i_0 \in B$ and define $y_0= S_{i_0} (x_{i_0})$. By Lemma
\ref{iso-hilb}, for every $i \in B$, there is a linear surjective isometry $\Phi_i : H \to H$  such that
$\Phi_i (S_i (x_i))= y_0$ and $\Vert \Phi_i - I \Vert  =  \Vert S_i (x_i) - y_0 \Vert \le r $.

We define an operator $R= \{R_i\}_{i \in I}\in \mathcal{L}(Z,H)$
by
$$
R_i = \Phi_i \circ S_i, \quad\, \forall i \in B \quad \text{and} \quad\, R_i = T_i, \quad\, \forall i \in I
\backslash B.
$$

Clearly that $R$ is in the unit ball of $\mathcal{L}(Z,H)$ and it satisfies
\begin{eqnarray*}
\Vert R-T \Vert &=&  \sup \{ \Vert R_i- T_i \Vert : i \in B \}\\
&\le&\sup \{ \Vert \Phi_i - I \Vert : i \in B \} +   \sup \{ \Vert S_i - T_i \Vert : i \in B \} \\
&\le & r + \sup \biggl \{ \biggl\Vert S_i - \frac{T_i }{ \Vert T_i \Vert} \biggr
\Vert
  : i \in B \biggr \}+
\sup \biggl \{ \biggl \Vert\frac{T_i }{ \Vert T_i \Vert} - T_i   \biggr \Vert : i \in B \biggr\}\\
&\le & r + s +  \sup \bigl \{ \bigl  \vert 1-  \Vert T_i \Vert  \bigr \vert : i \in B \bigr\} \\
 &\le & r + s + t <    \varepsilon .
\end{eqnarray*}
Let $P_B$ be  the natural projection on the subspace of elements in $Z$ whose support is contained in $B$.

Now observe that $x_0$ given by
$$
x_0 (i)=  \left\{%
\begin{array}{ll}
\dfrac { \Vert z_0(i) \Vert x_i }{\Vert P_B (z_0) \Vert}, & \hbox{if} \ i \in B \\
& \\
0 & \hbox{if} \ i \in I \backslash B \\
\end{array}%
\right.
$$
belongs to $S_Z$ and also satisfies
\begin{eqnarray*}
\Vert x_0 -z_0 \Vert &\le& \bigl \Vert x_0  -  \Vert P_B (z_0) \Vert x_0 \bigr \Vert + \bigl
\Vert \Vert P_B (z_0) \Vert x_0 - z_0 \chi_B \bigr \Vert +
\bigl \Vert   z_0 \chi_{I \backslash B} \bigr \Vert\\
&\le& \bigl \vert 1- \Vert P_B (z_0) \Vert   \, \bigr\vert +
\sum _{i \in B} \bigl \Vert \Vert z_0(i) \Vert x_i - z_0 (i) \bigr \Vert   +
\bigl \Vert   z_0 \chi_{I \backslash B} \bigr \Vert\\
&\le & 2 \bigl \Vert   z_0 \chi_{I \backslash B} \bigr \Vert
+ s\,\sum _{i \in B}
\Vert z_0(i) \Vert \sem \text{(by \eqref{Xi-BPBP})} \\
&\le &  2t+s \sem \text{(by \eqref{sum-B})}  \\
&<  &  \varepsilon.
\end{eqnarray*}
It remains to check that  $R$ attains its norm at $x_0$. Indeed,
\begin{eqnarray*}
\Vert R(x_0) \Vert &= & \frac{1}{ \Vert P_B(z_0) \Vert }  \Bigl \Vert \sum _{ i \in
B} \Vert
z_0(i) \Vert  R_i (x_i) \Bigr \Vert \\
& = & \frac{1}{ \Vert P_B(z_0) \Vert }  \Bigl \Vert \sum _{ i \in B} \Vert
z_0(i) \Vert  \Phi_i (S_i (x_i)) \Bigr \Vert \\
& = & \frac{1}{  \Vert P_B(z_0) \Vert }  \Bigl \Vert \sum _{ i \in B} \Vert
z_0(i) \Vert  y_0 \Bigr \Vert = 1.\\
\end{eqnarray*}
Hence $R \in S_{ \mathcal{L}(Z,H)}$ and $\Vert R (x_0) \Vert =1$.  This completes
the proof that the  pair  $(Z, H)$ has the BPBp.
\end{proof}

Let us note that it follows from \cite[Theorem 2.1]{ACKLM} that
$(X_i,H)$ has the BPBp for every  $i\in I$ with the same function
$\eta$ provided that  $ \bigl(\bigl( \oplus \sum _{i \in I} X_i
\bigr) _{\ell_1}, H \bigr)$ has the BPBp. This shows that the
assumption in Proposition \ref{prop-sum-1-BPBP} is a necessary
condition.

Now we prove stability results of  the Bishop-Phelps-Bollob\'{a}s
property for operators when the domain is $\ell_1$.

As we already mentioned it was proved that the pair $(\ell_1,Y)$ has the BPBp for operators if,
and only if, $Y$ has the approximate hyperplane series  property (see \cite[Theorem 4.1]{AAGM1}).
Since the AHSp is an isometric property, if a space is the (topological) direct sum of two
subspaces with the AHSp, in general it does not have the AHSp. However, we will prove that this
property is stable under  sums involving an absolute  (or monotone) norm. First we recall this
notion.

\begin{definition}
\label{def-nor-absoluta}
Let  $X$ and $Y$ be  Banach spaces, and $Z=X\oplus Y$, a norm $\Vert\cdot\Vert_f$ in $Z$ is said
to be {\it absolute} if there is a~function $f: \R^+_{0} \times \R^+_{0} \llll \R^+_{0} $ such
that
\begin{equation}
\label{def-absolute-norm}
\Vert x+ y \Vert_f=f(\Vert x\Vert ,\Vert y\Vert),\ \ \ \forall x\in X,\ y\in Y\ .
\end{equation}
The absolute norm is {\it normalized} if $f(1,0)=1=f(0,1)$.
\end{definition}

It is immediate to check that in   case that the  equality \eqref{def-absolute-norm} gives a norm
in $Z$, the function $f$ can be extended to a norm $\vert \cdot \vert$ on $\R^2$ satisfying
$\vert (r,s) \vert =  f ( \vert r \vert, \vert s \vert)$ for every pair of real numbers $(r,s)$.

We also recall that the norm $\vert \cdot \vert_{f}$ is   absolute on $\R^2 $ if, and only if, it
satisfies
$$
\vert  r \vert  \le \vert  s \vert , \  \vert t  \vert  \le  \vert  u  \vert \ \
\Rightarrow \ \ f(r, t ) \le f(s, u )
$$
(see  for instance \cite[Lemma 21.2]{BoDu}).

Clearly the usual $\ell_p$-norm of the sum of two Banach spaces is an absolute norm for every
$1\leq p\leq \infty$.

Next result is a far reaching extension of   Proposition 2.1, Theorems 2.3 and 2.6 in
\cite{AcArGaPa}, where  the $\ell_p$-norm on $\R^2$ for $1 \le p < \infty$  is considered. Part
of the essential idea of the argument we will use is contained there, however  our proof is
simpler.

The following technical lemma will be useful in the proof of the main result.

\vskip5mm

\begin{lemma}
\label{lemma-fact}
Let $\vert \cdot  \vert$ be an absolute and normalized norm on $\R^2$. For every $\varepsilon> 0$
there is $\delta > 0$ satisfying the following conditions{\rm:}
$$
(r,s) \in \R^2,  \ \ \vert (r,s) \vert =1, \   s> 1 - \delta \ \ \Rightarrow \ \ \exists t \in \R: \vert
(t,1) \vert =1 \ \ \text{\rm and} \ \ \vert t-r \vert < \varepsilon
$$
and
$$
(r,s) \in \R^2,   \ \vert (r,s) \vert =1 , \   r> 1 - \delta \ \ \Rightarrow \ \ \exists t \in \R: \vert
(1,t) \vert =1 \ \ \text{\rm and} \ \ \vert t-s \vert < \varepsilon.
$$
\end{lemma}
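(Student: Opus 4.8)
The plan is to exploit compactness of the unit sphere of $(\R^2,\vert\cdot\vert)$ together with the fact that the two claimed implications are symmetric under swapping coordinates, so it suffices to establish the first one; the second follows by applying the first to the absolute norm $\vert(a,b)\vert':=\vert(b,a)\vert$, which is again absolute and normalized. For the first implication, fix $\varepsilon>0$. First I would observe that the set $C=\{(r,s)\in\R^2:\vert(r,s)\vert=1\}$ is a compact curve, and that the ``vertical slice'' $\Gamma=\{t\in\R:\vert(t,1)\vert=1\}$ is a nonempty compact subset of $\R$ (nonempty because $f(0,1)=1$ gives $0\in\Gamma$). The key geometric point is monotonicity: if $(r,s)\in C$ with $s$ close to $1$, then $\vert r\vert$ must be small, because $\vert(r,s)\vert\ge\vert(0,s)\vert=s\cdot f(0,1)=s$ and, more to the point, if $\vert r\vert$ were bounded below by some $\rho>0$ then $1=\vert(r,s)\vert\ge\vert(\rho,s)\vert$, and as $s\to1$ the quantity $\vert(\rho,s)\vert$ tends to $\vert(\rho,1)\vert>\vert(0,1)\vert=1$ (strict, since the norm is not identically $1$ on that horizontal segment — here one uses that $\vert\cdot\vert$ is a genuine norm, hence $\vert(\rho,1)\vert\ge\vert(0,1)\vert$ with the triangle inequality, and strictness from $\vert(\rho,1)\vert\ge\rho\vert(1,0)\vert - \ldots$; more cleanly, convexity of the ball forces the slice at height $s$ to shrink to a single point as $s\uparrow1$ unless the ball has a vertical flat face through $(0,1)$, which is impossible for a norm).

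Concretely, here is the cleaner route I would actually write. Define $g:[0,1]\to[0,\infty)$ by letting $g(s)$ be the unique $r\ge0$ with $\vert(r,s)\vert=1$ when such $r$ exists (for $0\le s\le1$ such $r$ does exist and is unique by absoluteness and continuity, since $r\mapsto\vert(r,s)\vert$ is continuous, nondecreasing on $[0,\infty)$ by the absolute-norm monotonicity characterization quoted above, equals $s\le1$ at $r=0$, and $\to\infty$). Then $g$ is continuous (by a standard argument: monotone-in-$r$ plus continuity of $\vert\cdot\vert$ gives continuity of the implicitly defined $g$), $g(1)=0$ since $\vert(0,1)\vert=1$ and $\vert(r,1)\vert\ge\vert(0,1)\vert=1$ with $\vert(r,1)\vert>1$ for $r>0$ (strict because $\vert(r,1)\vert\ge\vert(r/2,1)\vert$ can't be constant on an interval for a norm — if it were, the ball would contain the segment from $(0,1)$ to $(r,1)$ on its boundary, contradicting that $(0,1/2)+(0,1/2)=(0,1)$ forces... ). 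To avoid that digression I would instead argue $g(1)=0$ directly: if $g(1)=r_0>0$ then $(r_0,1)\in C$ and $(0,1)\in C$, so the segment between them lies in $B$; its midpoint is $(r_0/2,1)$, which must then have norm $\le1$, i.e. $g(1)\ge r_0$ is consistent, so that's not a contradiction — the real contradiction is that $(0,1)=\tfrac12(0,2)$ wait $(0,2)\notin B$. Let me just use: $g(1)=0$ because $\vert(r,1)\vert\ge \max(|r|,1)\cdot c$ is false in general; the correct elementary fact is $\vert(r,1)\vert\ge\vert r\vert\,\vert(1,0)\vert=\vert r\vert$ and $\vert(r,1)\vert\ge\vert(0,1)\vert=1$, and for a norm $\vert(r,1)\vert\le 1$ together with $\vert(-r,1)\vert\le1$ would give, by convexity, $\vert(0,1)\vert\le1$ — fine — but also $2=\vert(0,2)\vert=\vert(r,1)+(-r,1)\vert\le 2$, no contradiction. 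So strictness genuinely requires strict convexity, which we do NOT have.

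Therefore the honest statement is $g(1)=0$ need not hold, so I would \emph{not} claim it. Instead: set $\Gamma=\{r\ge0:\vert(r,1)\vert=1\}=[0,r_{\max}]$, a compact interval containing $0$ (it is an interval by convexity and absoluteness). The map $s\mapsto g(s)$ is continuous and nonincreasing on $[0,1]$ with $g(1)=r_{\max}\ge0$. Given $\varepsilon>0$, continuity of $g$ at $s=1$ yields $\delta>0$ such that $s>1-\delta\ \Rightarrow\ |g(s)-g(1)|<\varepsilon$. Now suppose $(r,s)\in C$ with $s>1-\delta$; replacing $r$ by $|r|$ and $s$ by $|s|$ changes nothing by absoluteness, so $|r|=g(s)$, hence $|r|\in(g(1)-\varepsilon,g(1)+\varepsilon)$. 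Take $t=g(1)\cdot\sign(r)$ (or $t=g(1)$ if $r=0$); then $|(t,1)|=|(g(1),1)|=1$ since $g(1)=r_{\max}\in\Gamma$, and $|t-r|=|g(1)-|r||<\varepsilon$. This proves the first implication; the second is obtained by the coordinate-swap remark at the start. The main obstacle, which the above resolves, is that one is \emph{not} free to assume strict convexity — so the correct formulation picks $t$ on the boundary of the (possibly degenerate) flat face $\Gamma$ nearest to $r$, and the whole content is the uniform continuity (here just continuity at one point, upgraded by compactness) of the implicit function $g$, which follows from monotonicity in the first variable plus joint continuity of the norm.
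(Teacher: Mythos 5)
Your final paragraph has the right geometric picture — you correctly saw that strict convexity cannot be assumed and that the set $\Gamma=\{t\ge 0:\vert(t,1)\vert=1\}$ may be a nondegenerate interval $[0,r_{\max}]$ — and, once completed, your route is essentially the paper's own compactness argument (the paper argues contrapositively: take $(r_n,s_n)$ on the sphere with $s_n\to 1$ and uniformly far from $\Gamma$, extract a convergent subsequence, note the limit $(r,1)$ has norm $1$, contradiction). But as written you assert precisely the two facts that carry the whole content. First, single-valuedness of $g(s)$ for $s<1$: nondecreasingness of $r\mapsto\vert(r,s)\vert$ gives existence (intermediate value theorem) but not uniqueness, and indeed uniqueness fails at $s=1$, so the reason you give cannot suffice. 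The claim is true for $s<1$, but it needs an argument, e.g.: if $\vert(a,s)\vert=\vert(b,s)\vert=1$ with $0<a<b$ and $0<s<1$, then $(a,s)$ lies in the interior of the convex hull of $\{(0,1)\}\cup\bigl([0,b]\times[0,s]\bigr)$, a set contained in the unit ball by monotonicity, so $\vert(a,s)\vert<1$, a contradiction. Second, and more seriously, ``continuity of $g$ at $s=1$ with $g(1)=r_{\max}$'' is exactly the statement of the lemma; the parenthetical ``standard implicit-function'' argument does not apply at $s=1$, where the solution set is an interval rather than a point, and your identity $\lim_{s\to 1}g(s)=r_{\max}$ also uses the (true but unproved) inequality $g(s)\ge r_{\max}$, which again requires a domination/interior argument. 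So the key analytic step is assumed, not proved.

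The cleanest repair bypasses both issues and reproduces the paper's proof: for the lemma you only need $\mathrm{dist}(\vert r\vert,\Gamma)$ to be small when $s>1-\delta$, so take $t$ to be (a signed copy of) the point of $\Gamma$ nearest to $\vert r\vert$, not necessarily $r_{\max}$. If this failed for some $\varepsilon_0>0$, there would be $(r_n,s_n)$ on the sphere with $s_n>1-\tfrac1n$ and $\mathrm{dist}(\vert r_n\vert,\Gamma)\ge\varepsilon_0$; since $\vert r_n\vert=\vert(r_n,0)\vert\le\vert(r_n,s_n)\vert=1$ we may pass to a subsequence with $r_n\to r$, and since $s_n\le\vert(r_n,s_n)\vert=1$ we get $s_n\to 1$; continuity of the norm gives $\vert(\vert r\vert,1)\vert=\vert(r,1)\vert=1$, i.e. $\vert r\vert\in\Gamma$, contradicting $\mathrm{dist}(\vert r_n\vert,\Gamma)\ge\varepsilon_0$. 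With that inserted (and the exploratory first half of your text deleted — it contains statements you yourself retract, such as the strictness claim for $\vert(\rho,1)\vert$), your argument is correct; the coordinate-swap reduction and the handling of signs via absoluteness are fine as they stand.
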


\begin{proof}
Of course it suffices to  check only the first assertion. Assume that it is not true. Hence there
is some $\varepsilon_0> 0$ such that
$$
\forall \delta > 0 \ \exists (r_\delta, s_\delta ) \in S_{ (\R^2, \vert \cdot \vert )}, \
s_\delta
> 1 - \delta \sep \text{and} \sep t \in \R \ \text{with} \ \vert (t,1) \vert =1\sep \Rightarrow
 \vert t- r_\delta \vert \ge \varepsilon_0.
$$
We choose any sequence $\{ \delta _n\}$ of positive real numbers converging to $0$. By assumption
there is a sequence  $\{(r_n, s_n)\}$ in $S_{(\R^2, \vert \cdot \vert)}$ satisfying for each
$n\in \N$ that
\begin{equation}
\label{far-away}
s_n > 1 - \delta _n \seg \text{and} \seg \vert t- r_n \vert \ge \varepsilon_0 \sep \forall t \in \R \sep
\text{with} \sep \vert (t,1) \vert =1.
\end{equation}
By passing to a subsequence, we may assume that  $(r_n, s_n ) \to (r,s).$ Since $\vert (0,1)\vert
=1 $ and the norm is absolute  on $\R^2$ it is satisfied
$$
s= \vert (0,s) \vert \le  \vert (r,s) \vert =1.
$$
Since $s_n > 1 - \delta _n$ for each $n$ we also have $s \ge 1$. So $s=1$. So $\vert (r, 1) \vert
=1$. We also know that  $r_n \to r$,  hence  $(r_n , s_n ) \to (r,1)$ and this contradicts
condition
\eqref{far-away}.
\end{proof}

\bigskip

\begin{theorem}
\label{th-estable}
Assume that  $\vert \cdot \vert$ is an absolute  and normalized norm on $\R^2$. Let $X$ be
a~$($real or complex$)$ Banach space that can be decomposed as $X= M \oplus N$ for certain
subspaces $M$ and $N$ and such that
$$
\Vert (m,n) \Vert = \vert ( \Vert m \Vert , \Vert n \Vert ) \vert,  \ \ \ \ \forall  m \in M,\  n
\in N.
$$
Then $X$ has the AHSp if, and only if,  both $M$ and $N$ has the AHSp. In such case, both
subspaces satisfy Definition \ref{def-AHSP} with the same function $\eta $.
\end{theorem}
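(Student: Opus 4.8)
The plan is to prove the two implications in turn, after recording some elementary facts about the decomposition $X=M\oplus N$ with $\|(m,n)\|=|(\|m\|,\|n\|)|$: since $|\cdot|$ is absolute and normalized, $m\mapsto(m,0)$ and $n\mapsto(0,n)$ are isometric embeddings, the coordinate estimates $\|m\|\le\|(m,n)\|$ and $\|n\|\le\|(m,n)\|$ hold by monotonicity of $|\cdot|$, and the dual norm $|\cdot|^*$ on $\R^2$ is again absolute and normalized, so $X^*=M^*\oplus_{|\cdot|^*}N^*$. The observation driving the ``only if'' direction is a Hölder-type rigidity: if $x^*=(f,g)\in S_{X^*}$ attains its norm at $z=(c,d)\in S_X$ then, writing $\psi:=(\|f\|,\|g\|)$ (so $|\psi|^*=1$),
\[
1=\Rea x^*(z)\le\|f\|\|c\|+\|g\|\|d\|=\psi(\|c\|,\|d\|)\le|\psi|^*\,|(\|c\|,\|d\|)|=1,
\]
which forces $f(c)=\|f\|\|c\|$ and $g(d)=\|g\|\|d\|$ (as nonnegative reals) and $(\|c\|,\|d\|)$ to maximize $\psi$ over the unit ball of $(\R^2,|\cdot|)$.

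For ``$X$ has the AHSp $\Rightarrow$ $M$ has the AHSp'' put $\eta_M(\varepsilon):=\eta_X(\varepsilon/2)$. Given $\{x_n\}\subset S_M$ and a convex series $\sum\alpha_n$ with $\|\sum_n\alpha_nx_n\|>1-\eta_X(\varepsilon/2)$, regard the $x_n$ in $S_X$ and apply the AHSp of $X$ at level $\varepsilon/2$, obtaining $A\subset\N$ with $\sum_{k\in A}\alpha_k>1-\varepsilon/2$, some $x^*=(f,g)\in S_{X^*}$ and $z_k=(c_k,d_k)\in S_X$ with $x^*(z_k)=1$ and $\|z_k-x_k\|_X<\varepsilon/2$. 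The coordinate estimates give $\|c_k-x_k\|<\varepsilon/2$ and $\|d_k\|<\varepsilon/2$, hence $1-\varepsilon/2<\|c_k\|\le1$; also $f\ne0$ (otherwise $1=x^*(z_k)=g(d_k)$ with $|g(d_k)|\le\|d_k\|<1$). By the rigidity above $\tfrac{f}{\|f\|}(c_k)=\|c_k\|$, so $\hat c_k:=c_k/\|c_k\|\in S_M$ lies in $(\tfrac{f}{\|f\|})^{-1}(1)$ and $\|\hat c_k-x_k\|\le(1-\|c_k\|)+\|c_k-x_k\|<\varepsilon$. This verifies Definition \ref{def-AHSP} for $M$ with $\eta_M=\eta_X(\cdot/2)$; $N$ is symmetric, so when all three spaces have the AHSp both subspaces work with the common function $\eta_X(\cdot/2)$.

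For the converse, assume $M,N$ have the AHSp with a common function $\eta_0$ (take the pointwise minimum, shrunk below the identity). Fix $\varepsilon$, pick $\varepsilon'\ll\varepsilon$, let $\delta$ come from Lemma \ref{lemma-fact} for $\varepsilon'$, and choose $\rho<1$ near $1$, a small threshold $\kappa$, and $\eta$ small in terms of $\varepsilon',\delta,\kappa,\eta_0(\varepsilon')$ and $|\cdot|$. Let $\{x_n=(a_n,b_n)\}\subset S_X$ and $\sum\alpha_n$ convex with $\|\sum\alpha_nx_n\|_X>1-\eta$; set $A_0=\sum\alpha_na_n$, $B_0=\sum\alpha_nb_n$, $\ell:=|(\|A_0\|,\|B_0\|)|>1-\eta$. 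Choose $f_0\in S_{M^*}$, $g_0\in S_{N^*}$ norming $A_0,B_0$ and a supporting functional $\psi=(\psi_1,\psi_2)$ of $|\cdot|$ at $(\|A_0\|,\|B_0\|)/\ell$ with $\psi_1,\psi_2\ge0$ and $|\psi|^*=1$ (possible since the point is in the first quadrant). Then $x_0^*:=(\psi_1f_0,\psi_2g_0)\in S_{X^*}$ and $\Rea x_0^*(\sum\alpha_nx_n)=\psi_1\|A_0\|+\psi_2\|B_0\|=\ell>1-\eta$, so Lemma \ref{elemental} yields $A':=\{n:\Rea x_0^*(x_n)>\rho\}$ with $\sum_{n\in A'}\alpha_n>1-\eta/(1-\rho)$; moreover, since $\psi(\|a_n\|,\|b_n\|)\le|\psi|^*\,|(\|a_n\|,\|b_n\|)|=1$, for $n\in A'$ one has $\psi_1(\|a_n\|-\Rea f_0(a_n))+\psi_2(\|b_n\|-\Rea g_0(b_n))<1-\rho$, a sum of two nonnegative terms.

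The argument then splits according to $\psi$. When $\psi_1\le\kappa$ one has $\|B_0\|>1-\eta-\psi_1$ close to $1$, and replacing $\psi$ by $(0,1)$ and $x_0^*$ by $(0,g_0)$ lands in an ``$N$-dominant'' situation: on a large set $\|b_n\|>\rho>1-\delta$, Lemma \ref{lemma-fact} supplies $t_n$ with $|(t_n,1)|=1$ and $|t_n-\|a_n\||<\varepsilon'$, and the AHSp of $N$ applied to $\{b_n/\|b_n\|\}$ with weights proportional to $\alpha_n\|b_n\|$ gives $g\in S_{N^*}$ and $d_n\in g^{-1}(1)\cap S_N$ with $\|d_n-b_n/\|b_n\|\|<\varepsilon'$; then $z_n^X:=(t_n\,a_n/\|a_n\|,d_n)$ (any norm-$t_n$ vector in the first slot if $a_n=0$) has $\|z_n^X\|_X=|(t_n,1)|=1$, $x^*:=(0,g)$ norms it, and $\|z_n^X-x_n\|_X$ is small; small $\psi_2$ is symmetric. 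When $\psi_1,\psi_2>\kappa$, the displayed inequality forces $\Rea f_0(a_n)\approx\|a_n\|$ and $\Rea g_0(b_n)\approx\|b_n\|$ on $A'$; if $\Sigma_M:=\sum_{n\in A'}\alpha_n\|a_n\|$ or $\Sigma_N:=\sum_{n\in A'}\alpha_n\|b_n\|$ is small one reduces (via $\|a_n\|+\|b_n\|\ge1$) to a dominant case, and otherwise one applies the AHSp of $M$ and of $N$ to $\{a_n/\|a_n\|\}$ and $\{b_n/\|b_n\|\}$, with weights proportional to $\alpha_n\|a_n\|$ and to $\alpha_n\|b_n\|$ (the reweighted combinations have norm near $1$ because $\Sigma_M,\Sigma_N$ are bounded below), obtaining $f,g,u_n,v_n$, moves $(\|a_n\|,\|b_n\|)$ to a nearby point $(w_1^{(n)},w_2^{(n)})$ on the maximizing face of $\psi$, and sets $z_n^X:=(w_1^{(n)}u_n,w_2^{(n)}v_n)$, $x^*:=(\psi_1f,\psi_2g)$, so that $\|z_n^X\|_X=|(w_1^{(n)},w_2^{(n)})|=1$ and $x^*(z_n^X)=\psi_1w_1^{(n)}+\psi_2w_2^{(n)}=1$. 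In all cases the success set $A$ satisfies $\sum_{n\in A}\alpha_n>1-\varepsilon$, by passing the AHSp mass bounds for $M$ and $N$ back through the reweightings, using that $\|a_n\|$ or $\|b_n\|$ is bounded below on the relevant sets together with $\sum_{n\notin A'}\alpha_n<\eta/(1-\rho)$. I expect the real work to be (i) this bookkeeping in the balanced case, where indices with $a_n=0$ or $b_n=0$ (genuinely ``pure'' points on a segment face of $\psi$) must be separated out and estimated on their own, handled by the same functional; and (ii) the two-dimensional fact that a near-maximizer of $\psi$ on $S_{(\R^2,|\cdot|)}$ is $\varepsilon'$-close to an exact maximizer, which is Lemma \ref{lemma-fact} when the face lies on a coordinate axis and a routine compactness argument for $S_{(\R^2,|\cdot|)}$ in general.
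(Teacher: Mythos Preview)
Your ``only if'' argument is essentially the paper's: embed $S_M$ in $S_X$, apply the AHSp of $X$ at level $\varepsilon/2$, use the coordinate estimates coming from monotonicity of $|\cdot|$, observe the H\"older rigidity (this is the chain (2.7)--(2.8) in the paper) to get $m^*(P(x_k))=\|m^*\|\,\|P(x_k)\|$, and normalize.

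For the converse the paper and you diverge at the first move. The paper's key simplification is to \emph{first} apply the AHSp of $(\R^2,|\cdot|)$ itself (finite-dimensional spaces have it) to the sequence $\bigl(\|P(x_k)\|,\|Q(x_k)\|\bigr)_k$. This yields, on a large set $A$, points $(r_k,s_k)\in S_{(\R^2,|\cdot|)}$ close to $\bigl(\|P(x_k)\|,\|Q(x_k)\|\bigr)$ together with a single functional $(\alpha,\beta)\in S_{(\R^2)^*}$ satisfying $\alpha r_k+\beta s_k=1$ \emph{exactly} for every $k\in A$. This one step replaces your item~(ii) completely: you never need the general ``near-maximizer of $\psi$ is close to an exact maximizer'' lemma in the balanced case, only Lemma~\ref{lemma-fact} in the degenerate cases, and the scalar part $(\alpha,\beta)$ of the final functional is handed to you up front. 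The paper then chooses a norming functional $x^*=(m^*,n^*)$ for $\sum_{k\in A}\alpha_ky_k$ (with $y_k$ built from $(r_k,s_k)$) and splits on $\|m^*\|,\|n^*\|$ rather than on $(\psi_1,\psi_2)$. Because the $(r_k,s_k)$ already sit on the face of $(\alpha,\beta)$, the balanced case gives a \emph{pointwise} near-norming $\Rea\,(m^*/\|m^*\|)(m_k/r_k)>1-\eta_1$ on $\{k:r_k\ge s\}$, so the AHSp of $M$ is applied with the \emph{original} weights $\alpha_k$, not your reweighted $\alpha_n\|a_n\|$. The bookkeeping you flag as item~(i) then becomes the explicit partition $C=(D_1\cap F_1)\cup\bigl((B\setminus B_1)\cap F_1\bigr)\cup\bigl((B\setminus C_1)\cap D_1\bigr)$ with a three-line definition of $z_k$.

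Your route can be completed---your displayed inequality in the balanced case also gives pointwise near-norming once you restrict to $\|a_n\|\ge\tau$, so in fact you could drop the reweighting altogether---but as written you are effectively redoing the two-dimensional AHSp by hand (the compactness argument) and you pick up an extra layer of sub-cases (``$\Sigma_M$ small'' inside the balanced case) that the paper never needs. Invoking the AHSp of $\R^2$ first absorbs both of these and makes the remaining case analysis noticeably cleaner; it is worth adopting.
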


\begin{proof}
We can clearly assume that both $M$ and $N$ are non-trivial. Let $P$ and $Q$ be the natural
projections from $X$ onto $M$ and $N$, respectively.

First we check the necessary condition.  So assume that $X$ has the AHSp and we show that $M$
also has the AHSp. Let us fix $0 < \varepsilon < 1$ and let $\eta _0 $  be the positive number
satisfying Definition \ref{def-AHSP} for the space $X$  and $\varepsilon/2$.

Assume that  $\sum_{k=1}^{\infty}\alpha_k m_k$ is a convex series with $\{ m_k : k \in A \}
\subset S_M$ satisfying
$$
\Bigl \Vert \sum_{k=1}^{\infty}\alpha_k m_k \Bigr \Vert > 1- \eta_0.
$$
By  the assumption  there are $A \subset \N$ and  $\{ x_k : k \in \N \} \subset S_X$ such that
$$
\sum _{ k \in A} \alpha _k >  1- \frac{\varepsilon}{2} > 0, \  \  \ \ \Vert x_k - m_k \Vert <
\frac{\varepsilon}{2}, \ \ \  \forall k \in A \seg \text{and} \seg \text{co} \bigl\{ x_k : k \in A \bigr\}
\subset S_X.
$$
So $A \ne \varnothing$.

Since the norm $\vert \cdot  \vert $  on $\R^2$ is an absolute norm it is satisfied
\begin{equation}
\label{Pxk-mk}
 \Vert P(x_k) - m_k
\Vert =  \Vert P(x_k - m_k ) \Vert  \le \Vert x_k - m_k \Vert < \frac{\varepsilon}{2},
\end{equation}
and
$$
\Vert Q(x_k) \Vert \le   \Vert x_k - m_k  \Vert   < \frac{\varepsilon}{2}.
$$
Hence we have that
\begin{equation}
\label{Pxk-Qxk} \Vert P(x_k)  \Vert > 1- \frac{\varepsilon}{2}
\seg \text{and} \seg  \Vert Q(x_k) \Vert  < \frac{\varepsilon}{2}, \seg  \forall k \in A.
\end{equation}
On the other hand, since $\text{co} \bigl\{ x_k : k \in A \bigr\} \subset S_X$ there is $x^* \in
S_{X^*}$ that can be decomposed as $x^* =m^* + n^*$, for some $m^* \in M^*$ and $n^* \in N^*$ and
such that for each $k \in A$ it is satisfied
\begin{eqnarray}
\label{norma-funcional}
1 &=&  \rea x^*\left(x_k\right) \notag\\
&=&  \rea  m^*\bigl( P(x_k) \bigr)  +  \rea n^ * \bigl( Q(x_k) \bigr) \notag \\
&\leq&  \bigl \Vert  m^* \bigr \Vert \  \Vert    P(x_k)  \Vert + \bigl \Vert  n^* \bigr \Vert
\ \Vert    Q(x_k)  \Vert \\
& =&  \bigl ( \bigl \Vert  m^* \bigr \Vert, \bigl \Vert  n^* \bigr
\Vert \bigr) \Bigl ( \bigl
\Vert  P(x_k)\Vert, \Vert  Q(x_k) \Vert   \bigr) \notag \\
&\le& \Vert x^* \Vert  \Vert x_k \Vert = 1. \notag
\end{eqnarray}

As a consequence, we obtain that
\begin{equation}
\label{m*Pxk} m^* \bigl( P(x_k) \bigr) = \Vert m^* \Vert \ \Vert
P(x_k) \Vert,   \seg   \forall k \in A.
\end{equation}

Let us fix $k \in A$. If $m^*=0$, in view of \eqref{norma-funcional} we obtain that $\Vert Q(x_k
) \Vert =1$, which contradicts \eqref{Pxk-Qxk}. By using again \eqref{Pxk-Qxk}  we also know that
$P(x_k) \ne 0$, so we can write $u_k= \frac{ P(x_k)} {\Vert P(x_k) \Vert }$. By \eqref{m*Pxk}
we obtain that
$$
\frac{m^*}{ \Vert m^* \Vert }(u_k)=1 \seg  \forall k \in A
$$
and clearly  $\frac{m^*}{ \Vert m^* \Vert } \in S_{M^*} \subset S_{X^*}$.

For $k \in A$ we also have
\begin{eqnarray*}
\label{uk-mk-nec}
 \Vert u_k - m_k \Vert & \le \Bigl \Vert \frac{ P(x_k)}{\Vert P(x_k) \Vert
} - P(x_k) \Bigr \Vert + \Vert P(x_k) -  m_k \Vert  \\
 & \le \bigl \vert 1- \Vert P(x_k) \Vert  \bigr \vert  + \Vert P(x_k) - m_k \Vert \\
 & < \varepsilon \seg \text{(by \eqref{Pxk-Qxk} and \eqref{Pxk-mk})}.
\end{eqnarray*}
We checked that $M$ has the AHSp.

Conversely, assume that $M $ and $N$ have the  AHSp. We will prove that $X$ also has the AHSp.
Let  $ \varepsilon $ be a real number with $0 < \varepsilon < 1$. In view of Lemma
\ref{lemma-fact} there is  $0 < \delta < 1 $ satisfying the following conditions
\begin{equation}
\label{close-var-1}
 (a,b) \in S_{ (\R^2, \vert \cdot \vert)},  \sep b> 1 - \delta \ \ \Rightarrow \ \ \exists c \in \R: \vert
(c,1) \vert =1 \ \ \text{and} \ \ \vert a-c \vert < \frac{\varepsilon}{5}
\end{equation}
 and
\begin{equation}
\label{close-var-2}
(a,b) \in S_{ (\R^2, \vert \cdot  \vert)}, \seg a> 1 - \delta \ \ \Rightarrow \ \ \exists c \in
\R: \vert (1,c) \vert =1 \ \ \text{and} \ \ \vert b-c \vert < \frac{\varepsilon}{5}.
\end{equation}

Let us choose $0 < \varepsilon_1  < \frac{\varepsilon}{8}$. Assume that the pair $(
\varepsilon_1, \eta _1)$ satisfy  condition (c) in Proposition \ref{char-AHSP}  for both $M$ and
$N$. We also fix  real numbers $r, s $ and $\varepsilon_0$ such that
\begin{equation}
\label{parameters}
0<  s< \min \Bigl\{ \frac{\delta}{2},  \frac{\eta _1}{2} \Bigr\}, \sem 0< r < \min \Bigl\{\frac{\delta}{2},
s^2 \eta _1  \Bigr\} \sem \text{and} \sem 0<  \varepsilon_0 < \frac{r \varepsilon}{8}.
\end{equation}

By \cite[Proposition 3.5]{AAGM1} finite-dimensional spaces have the AHSp. So for every
$\varepsilon_0 > 0$ there is $ 0 < \eta _0 <   \varepsilon_0$ satisfying condition (d) in
Proposition \ref{char-AHSP} for $\R^2 $ endowed with the norm $\vert \cdot \vert$.

Let  $\{x_k\} $ be a sequence in $S_X$ and $\sum \alpha _k$ be a~convex series such that
\linebreak[4] $\displaystyle{\biggl \Vert \sum_{ k=1}^\infty \alpha _k x_k \biggr\Vert > 1 - \eta
_0}$. Hence we have
\begin{align*}
\label{1-norm-negat-part-1}
 1- \eta _0  & < \biggl \Vert \sum _{k=1}^\infty \alpha _k x_k \biggr \Vert  \notag \\
& = \biggl \Vert \sum _{k=1}^\infty \alpha _k \bigl( P(x_k) + Q(x_k) \bigr) \biggr \Vert  \notag \\
&=  \biggl \vert \biggl( \Bigl\Vert  \sum _{k=1}^\infty \alpha _k  P(x_k) \Bigr
\Vert, \Bigl\Vert
\sum _{k=1}^\infty \alpha _k  Q(x_k) \Bigr \Vert \biggr) \biggr \vert  \notag \\
& \le  \biggl \vert \biggl( \sum _{k=1}^\infty \alpha _k   \bigl \Vert  P(x_k)
\bigr\Vert, \sum _{k=1}^\infty \alpha _k   \bigl\Vert  Q(x_k) \bigr\Vert \biggr)
\biggr \vert  \notag \\
& = \biggl \vert \sum _{k=1}^\infty \alpha _k  \bigl( \bigl\Vert P(x_k) \bigr
\Vert, \bigl\Vert Q(x_k)  \bigr\Vert \bigr) \biggr \vert \notag.
\end{align*}

Since $(\R^2, \vert \cdot \vert) $ has the AHSp, it follows that for the convex series
\linebreak[4]
$ \sum _{k=1}^\infty \alpha _k  \bigl( \bigl\Vert P(x_k) \bigr \Vert, \bigl\Vert Q(x_k)
\bigr\Vert \bigr)$, there are a subset $A \subset \N$, $\{ (r_k,s_k): k \in A \} \subset
S_{\R^2}$ and $(\alpha, \beta ) \in S_{ (\R^2)^{*}}$ satisfying
\begin{equation}
\label{sum-A}
\sum _{k \in A} \alpha _k > 1 - \varepsilon_0, \seg  r_k, s_k \ge 0, \seg \alpha r_k + \beta s_k =1,\seg
\forall k \in A,
\end{equation}
and
\begin{equation}
\label{P-Q-r-s}
\bigl \vert \Vert P(x_k) \Vert - r_k \bigr \vert < \varepsilon_0,\seg
\bigl \vert \Vert Q(x_k) \Vert - s _k \bigr \vert < \varepsilon_0, \ \ \  \forall   k \in A.
\end{equation}

It is clearly satisfied that
\begin{align}
\label{sum-A-xk}
\nonumber \biggl \Vert \sum _{k \in A } \alpha _k x_k  \biggr \Vert  & \ge     \biggl \Vert \sum
_{k=1}^{\infty} \alpha _k x_k \biggr \Vert
-  \biggl \Vert \sum _{ k \in \N \backslash A}  \alpha _k x_k   \biggr \Vert \\
& \ge   \biggl \Vert \sum _{k=1}^\infty  \alpha _k  x_k   \biggr \Vert - \sum _{k \in \N
\backslash A} \alpha _k \\
\nonumber  & >   1-\eta _0 -\varepsilon_0         \ \ \ \text{(by \eqref{sum-A})}\\
\nonumber  & >    1- 2\varepsilon_0.
\end{align}

Now fix arbitrary elements $m_{0} \in S_M$ and $n_0 \in S_N$ and define the following elements:
$$
m_k := \left\{
\begin{array}{ll}
\frac{r_kP\left(x_k\right)} {\left\| P\left(x_k\right) \right\|} & \text{\
if\ } k \in A \text {\ and\ } P\left(x_k\right) \ne 0 \\
r_km_0 & \text{\ if\ } k \in A \text {\ and\ } P\left(x_k\right) =0 \\
\end{array}
\right.
$$
and
$$
n_k := \left\{
\begin{array}{ll}
\frac{s_kQ\left(x_k\right)} {\left\| Q\left(x_k\right) \right\|} & \text{\ if\ } k \in A \text
 {\ and\ } Q\left(x_k\right) \ne 0 \\
s_kn_0 & \text{\ if\ } k \in A \text {\ and\ } Q\left(x_k\right) =0.\\
\end{array}
\right.
$$

Next we write  $y_k := m_k + n_k$ for all $k \in A$. Since $\vert (r_k, s_k )\vert=1$ for every
$k \in A$, it is clear that $\left\{y_k:k \in A \right\} \subset S_X$ and in view of
\eqref{P-Q-r-s} we obtain
\begin{equation}
\label{y-x} \left\| y_k - x_k \right\| \le  \vert r_k -  \Vert P(x_k)
\Vert\vert + \vert s_k - \Vert Q(x_k) \Vert  \vert < 2 \varepsilon_0, \seg  \forall k \in A.
\end{equation}
 By the previous inequality and bearing in mind \eqref{sum-A-xk}  we have
$$
\biggl \Vert \sum_{k\in A} \alpha_k y_k \biggr \Vert > \biggl \Vert \sum_{k\in A}   \alpha _k x_k \biggr
\Vert-2\varepsilon_0 >  1- 4\varepsilon_0.
$$
In view of Hahn-Banach theorem there is a functional $x^* \in S_{X^*}$ such that
$$
\rea x^* \biggl( \sum _{ k \in A} \alpha _k y_k \biggr) =  \biggl \Vert  \sum _{ k \in A} \alpha _k y_k
\biggr \Vert > 1 -  4 \varepsilon_0.
$$
Now we  define $B= \bigl\{ k \in A : \rea x^*(y_k) > 1 -  r \bigr\}$. In view of Lemma \ref{elemental} we
have that
\begin{equation}
\label{sum-B-alfa}
\sum _{ k \in B }  \alpha _k > 1 - \frac{4 \varepsilon_0}{ r} > 0.
\end{equation}

If we decompose $x^* = m^* + n^*$, for  each  $k \in B$ we have that
\begin{align}
\label{estado}
1- r&
 < \rea x^* (y_k) =  \rea \bigl(  m^* (m_k) + n^* (n_k) \bigr)  \notag \\
&
\le    \Vert m^* \Vert \Vert m_k \Vert + \rea n^* (n_k)   \\
& \le    \Vert m^* \Vert \Vert m_k \Vert + \Vert n^* \Vert \Vert
n_k \Vert \notag  \le  1.
\notag
\end{align}

As a consequence of \eqref{estado}, for each $k \in B$, we also have that
\begin{equation}
\label{m-Px-R}
  \Vert m^* \Vert  r_k =  \Vert m^* \Vert  \Vert m_k \Vert   \le \rea m^* (m_k) + r
\end{equation}
and
\begin{equation}
\label{n-Qx-R}
\Vert n^* \Vert  s_k = \Vert n^* \Vert \Vert n_k
\Vert \le \rea n^* (n_k) + r.
\end{equation}

In order to show the result  we will consider three  cases:

\vskip5mm

 Case 1)  Assume that $ \Vert m^* \Vert  \le s $.
\newline
Since $\Vert n^* \Vert \le \Vert x^* \Vert =1$, in view of \eqref{estado}
 we know that
\begin{equation}
\label{s-k-big} s_k \ge \Vert n^* \Vert s_k \ge 1-r - s> 1 -
\delta , \seg  \forall  k \in B.
\end{equation}
 By using also \eqref{n-Qx-R} we obtain that
$$
\rea n^*  (n_k) \ge 1-2r -s > 1 - \eta _1,  \seg    \forall k \in B.
$$

Since $N$ has the AHSp there are  $C \subset B$,  $\{ v_k: k \in C \} \subset S_N$  and $n_{1}^*
\in S_{N^*}$ such that
\begin{equation}
\label{AHSP-N} \sum _{k \in C} \alpha _k > (1- \varepsilon_1) \sum
_{k \in B} \alpha _k, \sep n_{1}^* (v_k)= 1\sep \text{and} \sep \bigl \Vert  v_k - n_k \bigr \Vert  <
\varepsilon_1, \sem   \forall k \in C.
\end{equation}

By \eqref{s-k-big} we  can use \eqref{close-var-1},  and so  for every $k \in C$ there is $a_k
\in \R$ such that
\begin{equation}
\label{ak-rk}
\vert (a_k, 1) \vert =1, \sem \vert a_k - r_k \vert < \frac{ \varepsilon}{5}.
\end{equation}
So we define the subset $\{ z_k: k \in C\}\subset X$  by
$$
z_k= a_k \frac{m_k}{\Vert m_k \Vert} + v_k \sep \text{if }\sep m_k \ne 0, \seg
z_k= a_k m_0 + v_k \sep \text{if }\sep m_k=0, \seg   \forall k \in C.
$$
Clearly we have that
$$
\Vert z_k\Vert = \vert (a_k, 1)\vert =1, \sem   \forall  k \in C.
$$
By  \eqref{y-x}, \eqref{ak-rk} and \eqref{AHSP-N}  we obtain that
\begin{eqnarray*}
\Vert z_k - x_k \Vert&\le &  \Vert  z_k - y_k
\Vert + \Vert y_k - x_k \Vert\\
&\le&  \vert a_k - r_k\vert + \Vert v_k -  n_k \Vert + 2 \varepsilon_0  \\
&\le & \frac{\varepsilon}{5} + \varepsilon_1 + 2 \varepsilon_0 \\
&< &\varepsilon.
\end{eqnarray*}

We also have that
$$
n_{1}^* (z_k)=  n_{1}^* (v_k)=1, \seg   \forall k \in C.
$$

Finally from \eqref{AHSP-N} and \eqref{sum-B-alfa} we also know that
$$
\sum _{k \in C} \alpha _k > (1- \varepsilon_1)  \sum _{k \in B} \alpha _k > (1- \varepsilon_1) \Bigl(1-
\frac{4\varepsilon_0}{r}\Bigr) > 1- \varepsilon_1 - \frac{4\varepsilon_0}{r} > 1- \varepsilon.
$$
So the proof is finished in this case.

Case 2)  Assume that $ \Vert n^* \Vert  \le s $.
\newline
We can proceed in the same way that in Case 1, but by using that $M$ has the AHSp.

Case 3) Assume that $ \Vert m^* \Vert , \Vert n^* \Vert > s $.
\newline
We define the set $B_1$ given by
$$
B_1= \{ k \in B : r_k \ge s\}.
$$
For  each   element  $k \in B_1$, in view of \eqref{m-Px-R} we have that
$$
\frac{\rea m^* (m_k)}{ \Vert m^* \Vert r_k} \ge 1- \frac{r}{ \Vert
m^* \Vert r_k  } \ge 1- \frac{r}{s^2} > 1 - \eta_{1}.
$$

Since $M$ has the AHSp there is a set $D_1 \subset B_1$,  $\{ u_k: k  \in D_1 \} \subset S_M$
and $m_{1}^* \in S_{M^*}$ such that
\begin{equation}
\label{sum-D1}
\sum_{k\in D_1} \alpha _k \ge (1- \varepsilon_1)  \sum_{k\in B_1} \alpha _k \ge \sum_{k\in B_1} \alpha _k -
\varepsilon_1
\end{equation}
and
\begin{equation}
\label{uk-mk}
\Bigl \Vert u_k - \frac{ m_k}{r_k} \Bigr \Vert < \varepsilon_1, \seg m_{1}^* (u_k)=1, \seg \forall k \in D_1.
\end{equation}

In an analogous way, we  can proceed by defining  the set $C_1=\{k \in B : s_k \ge s\}$ and by
using  that $N$ has the AHSp we obtain that there is a set $F_1 \subset C_1$,   $\{ v_k: k  \in
F_1 \} \subset S_N$  and $n_{1}^* \in S_{N^*}$ such that
\begin{equation}
\label{sum-F1}
\sum_{k\in F_1} \alpha _k \ge (1- \varepsilon_1)  \sum_{k\in C_1} \alpha _k \ge \sum_{k\in C_1} \alpha _k -
\varepsilon_1
\end{equation}
and
\begin{equation}
\label{vk-nk} \Bigl \Vert v_k - \frac{ n_k}{s_k} \Bigr \Vert <
\varepsilon_1, \seg n_{1}^* (v_k)=1, \seg \forall  k \in F_1.
\end{equation}

Let us notice that for $k \in B \backslash B_1$ we have that $r_k \le s$ and since $1=\vert (r_k,
s_k) \vert \le s +  s_k < \frac{1}{2} + s_k $ then $s_k >\frac{1}{2}>  s $. Hence $k \in C_1$.
Hence we checked that
\begin{equation}
\label{B-B1-C1}
 B \backslash B_1 \subset C_1 \seg \text{and \ so } \seg B \backslash C_1 \subset B_1.
\end{equation}

Clearly we have that
\begin{eqnarray}
\label{sum-B1-C1}
\sum _{ k \in B_1 \cap C_1} \alpha _k & \le & \sum _{ k \in D_1 \cap F_1 } \alpha _k +  \sum _{ k \in B_1
\backslash D_1}
 \alpha _k +  \sum _{ k \in C_1 \backslash F_1}  \alpha _k\\
\nonumber &\le&   \sum _{ k \in D_1 \cap F_1 } \alpha _k  + 2 \varepsilon_1  \sem \text{(by \eqref{sum-D1}
and \eqref{sum-F1}).}
\end{eqnarray}

We also obtain
\begin{eqnarray}
\label{sum-B-B1}
\nonumber \sum _{ k \in B \backslash B_1} \alpha _k & = & \sum _{ k \in (B \backslash B_1)
\cap F_1} \alpha _k + \sum _{ k \in B \backslash (B_1 \bigcup F_1)  } \alpha _k\\
 &\le& \sum _{ k \in (B \backslash B_1) \cap F_1}  \alpha _k  +  \sum_{ k \in C_1 \backslash F_1 }
\alpha _k   \sem \text{(by \eqref{B-B1-C1})} \\
\nonumber  & \le & \sum _{ k \in (B \backslash B_1) \cap F_1} \alpha _k + \varepsilon_1   \sem \text{(by
\eqref{sum-F1}).}
\end{eqnarray}

By arguing as above we get
\begin{eqnarray}
\label{sum-B-C1}
\nonumber
 \sum _{ k \in B \backslash C_1} \alpha _k & \le & \sum _{ k \in (B \backslash C_1)
\cap D_1} \alpha _k + \sum _{ k \in B \backslash (C_1 \bigcup D_1)  } \alpha _k\\
 &\le& \sum _{ k \in (B \backslash C_1) \cap D_1}  \alpha _k  +  \sum_{ k \in B_1 \backslash D_1 }
\alpha _k \sem \text{(by \eqref{B-B1-C1})}  \\
\nonumber  & \le & \sum _{ k \in (B \backslash C_1) \cap D_1} \alpha _k + \varepsilon_1   \sem \text{(by
\eqref{sum-D1}).}
\end{eqnarray}

Now we take the set $C$ given by $C= (D_1 \cap F_1) \bigcup \bigl( (B \backslash B_1) \cap
F_1\bigr) \bigcup \bigl( (B \backslash C_1) \cap D_1)$. Let us notice  that in view of
\eqref{B-B1-C1} the three subsets  whose union is $C$ are pairwise disjoint.

We deduce that
\begin{eqnarray*}
\nonumber
 \sum _{ k \in C} \alpha _k & = & \sum _{ k \in D_1 \cap F_1} \alpha _k + \sum _{ k \in (B \backslash B _1) \cap F_1 }
  \alpha _k   +   \sum _{ k \in (B \backslash C _1) \cap D_1 }   \alpha _k   \\
\nonumber &\ge& \sum _{ k \in B_1 \cap C_1} \alpha _k   + \sum _{ k \in B \backslash B _1}
  \alpha _k   +   \sum _{ k \in B \backslash C _1  }   \alpha _k - 4 \varepsilon_1 \sep
\text{(by \eqref{sum-B1-C1}, \eqref{sum-B-B1} and \eqref{sum-B-C1})}  \\
\nonumber & = & \sum _{ k \in B} \alpha _k -4   \varepsilon_1 \\
& > & 1- \frac{4\varepsilon_0}{r} - 4\varepsilon_1  \sem \text{(by \eqref{sum-B-alfa})} \\
& > & 1- \varepsilon.
\end{eqnarray*}

If $D_1= \varnothing$, then  $C= (B \backslash B_1 ) \cap F_1$. In this case we choose any
elements $u_0 \in S_M $ and $m_{1}^* \in S_{M^*}$ with $m_{1}^* (u_0)=1.$ Analogously, in case
that $F_1= \varnothing$, we have $C= (B \backslash C_1 ) \cap D_1$ and we  choose $v_0\in S_N$
and $n_{1}^* \in S_{ N^*}$ such that $n_{1}^* (v_0)=1$. Otherwise  $D_1 \ne \varnothing$ and $F_1
\ne \varnothing$ and so the elements $m_{1}^* $ and $n_{1}^* $ satisfying \eqref{uk-mk} and
\eqref{vk-nk} attain their norms; so in this case we can choose $u_0 \in S_M$ and $v_0\in S_N$
with $m_{1}^*(u_0)=1$ and $n_{1}^* (v_0)=1$.

For  each  $k \in C$ we define
$$
z_k = \left\{%
\begin{array}{ll}
    r_k u_k + s_k v_k  & \ \hbox{if} \ k \in D_1 \cap F_1  \\
    r_k u_0 + s_k v_k  & \ \hbox{if} \ k \in (B\backslash B_1 )\cap F_1  \\
    r_k u_k + s_k v_0  & \ \hbox{if} \ k \in (B\backslash C_1) \cap D_1.  \\
\end{array}%
\right.
$$

We claim that $\Vert z_k - x_k \Vert < \varepsilon$ for each $k \in C$. To see this
observe that for $k \in D_1 \cap F_1$ we have
\begin{eqnarray*}
\Vert z_k - x_k \Vert &\le & \Vert z_k - y_k \Vert + \Vert y_k - x_k \Vert\\
&\le& \Bigl \vert \Bigl(  r_k \Bigl \Vert u_k - \frac{ m_k }{r_k } \Bigr \Vert, s_k \Bigl \Vert v_k - \frac{
n_k }{ s_k } \Bigr \Vert \Bigr) \Bigr \vert +
2\varepsilon_0    \sem \text{(by \eqref{y-x})} \\
&\le& \bigl \vert \bigl( r_k \varepsilon_1, s_k \varepsilon_1 \bigr) \bigr \vert + 2\varepsilon_0
\sem \text{(by \eqref{uk-mk} and \eqref{vk-nk})} \\
&\le & \varepsilon_1+2\varepsilon_0 < \varepsilon. \\
\end{eqnarray*}

For $k \in (B\backslash B_1) \cap F_1$ we have that
\begin{eqnarray*}
\Vert z_k - x_k \Vert &\le & \Vert z_k - y_k \Vert + \Vert y_k - x_k \Vert\\
&\le&   2 r_k + s_k \Bigl \Vert v_k - \frac{ n_k}{ s_k } \Bigl \Vert +
2\varepsilon_0   \sem \text{(by \eqref{y-x})} \\
&\le&   2 s + \varepsilon_1 + 2\varepsilon_0  \sem \text{(by \eqref{vk-nk})}  \\
&< & \varepsilon. \\
\end{eqnarray*}

In case when $k \in (B\backslash C_1) \cap D_1$,
\begin{eqnarray*}
\Vert z_k - x_k \Vert &\le & \Vert z_k - y_k \Vert + \Vert y_k - x_k \Vert\\
&\le&  r_k \Bigl \Vert u_k - \frac{ m_k}{ r_k } \Bigl \Vert + 2 s_k +
2\varepsilon_0  \sem \text{(by \eqref{y-x})} \\
&\le&  \varepsilon_1 +2s + 2\varepsilon_0 \sem \text{(by \eqref{uk-mk})}  \\
&< & \varepsilon \\
\end{eqnarray*}
and this proves the claim.

\vskip-2mm

Now we observe that  $\alpha m_{1}^* + \beta n_{1}^* \in X^*$ and $\Vert \alpha m_{1}^* + \beta
n_{1}^* \Vert = \vert (\alpha, \beta ) \vert ^* =1$. In view of \eqref{uk-mk}, \eqref{vk-nk} and
the choice of $u_0 $ and $v_0$, for  each   $k \in C$ one clearly has
\begin{eqnarray*}
(\alpha m_{1}^* + \beta n_{1}^*)(z_k) &= &  \alpha  m_{1}^* (P(z_k)) + \beta  n_{1}^* (Q(z_k))   \\
&=&  \alpha r_k + \beta s_k = 1.
\end{eqnarray*}
\end{proof}

Let us remark that we have been informed by the referee about the paper by 
	F.J. Garc{\'i}a-Pacheco \cite{Ga}, where the easier part of the above result was independently obtained.

Before we state and prove a~stability result of AHSp for some infinite sums of Banach spaces that
includes infinite $\ell_p$-sums,  we recall the following notion that was introduced in
\cite[Definition 2.1]{CKLM}.

\begin{definition}
\label{def-AHP}
A Banach space $X$ has the  {\it approximate hyperplane property} (AHp) if there exists a
function $\delta: (0,1) \longrightarrow (0,1)$ and a $1$-norming subset $C$ of $S_{X^*}$
satisfying the following property.

Given   $\varepsilon> 0$ there is a function $\Upsilon_{X, \varepsilon}: C \llll
S_{X^*}$ with  the following condition
$$
x^* \in C,\  x \in S_X, \ \Rea x^* (x) > 1 - \delta (\varepsilon) \ \Rightarrow \
\dist(x, F(\Upsilon_{X, \varepsilon} (x^*))) < \varepsilon,
$$
where $F(y^*) = \{ y \in S_X : \Rea y^* (y)=1\}$ for any $y ^*\in S_{X^*}$.

A family of Banach spaces $\{ X_i : i \in I\}$   has {\it $AHp$ uniformly}  if every space $X_i$
has property $AHp$ with the same function $\delta$.
\end{definition}

Clearly we can assume that the $1$-norming subset $C$ in the previous definition satisfies
$\mathrm{T}C \subset C$, where $\mathrm{T}$  is the unit sphere of the scalar field.

Let us notice that a similar property to AHp was implicitly used to prove that several classes of
spaces have AHSp (see \cite{AAGM1}).

It is known that property  AHp implies AHSp (see for instance \cite[Proposition 2.2]{CKLM}).
Examples of spaces having  AHp  are finite-dimensional spaces, uniformly convex spaces, $L_1
(\mu)$ for every measure $\mu$   and also $C(K)$ for every compact Hausdorff topological space
$K$ (see \cite[Propositions 3.5, 3.8, 3.6 and 3.7]{AAGM1} and also \cite[Corollary 2.12]{CKLM}).

In what follows we will use the standard notation from the theory of Banach lattices as presented
for example in \cite{LT}. We denote by $\omega$ the space of all  real sequences. As usual, the
order $|x|:=(|x_n|) \leq |y|$ for $x=(x_n)$, $y=(y_n)\in \omega$ means that $|x_n| \leq |y_n|$
for each $n \in \mathbb{N}$.

A (real) Banach space $E\subset \omega$ is  {\it solid} whenever $x \in w$, $y \in E$ and $|x|
\leq |y|$ then $x \in E$ and $\|x\|_E \leq \|y\|_E$. $E$ is said to be a~\emph{Banach sequence
lattice} (or Banach sequence space)  if  $E\subset \omega$, $E$ is solid and there exists $u \in
E$ with $u>0$. A~Banach sequence lattice $E$ is said to be \emph{order continuous} if for every
$0 \le f_n \downarrow 0$, it follows that $\|f_n\|_E \to 0$. If $E$ is an order-continuous Banach
sequence lattice, then $E^{*}$ can be identified in a~natural way with the \emph{K\"othe dual}
space $(E', \|\cdot\|_{E'})$ of all $x=(x_k)\in \omega$ equipped with the norm
\[
\|x\|_{E'} := \sup_{(y_k)\in B_E} \sum_{k=1}^{\infty} |x_k y_k|\,.
\]

Let $E$ be a~Banach sequence lattice. For a~given sequence $(X_k,
\|\cdot\|_{X_k})_{k=1}^{\infty}$ of Banach spaces the vector space of sequences $x =
(x_k)_{k=1}^{\infty}$, with $x_k\in X_k$ for each $k\in \mathbb{N}$ and with $(\|x_k\|)\in E$,
becomes a~Banach space when equipped with the norm
\[
\|(x_k)\| = \big\|\big(\|x_k\|_{X_k}\big)\big\|_{E};
\]
this space will be denoted by $\big(\oplus \sum_{k=1}^{\infty} X_k\big)_E$.

Finally we recall that a Banach lattice  $E$ is {\it uniformly monotone} (UM) if for every
$\varepsilon > 0$ there is $\delta> 0$ such that whenever $x \in S_E$, $ y \in E$ and $x,y \ge 0$
the condition $\Vert x +y \Vert \le 1 + \delta $ implies that $\Vert y \Vert \le \varepsilon$. It
is known that every UM Banach lattice is order continuous (see \cite[Theorem 22]{Bi}).

We will use the following duality result which is well known in the case $E=\ell_p$ with $1\leq
p<\infty$ or $E=c_0$ (see, e.g., \cite[Theorem 12.6]{AB}). Since the proof of the general case is
similar we omit it.

\begin{theorem}
\label{th-dual}
Let $E$ be an order continuous Banach sequence lattice and let $(X_n)$ be
a~sequence of Banach spaces. Then the mapping $\big(\oplus
\sum_{n=1}^{\infty}X_n^{*}\big)_{E'} \ni x^{*}=(x_n^{*}) \mapsto \phi_{x^{*}}$
defined by
\[
\phi_{x^{*}}(x_n) = \sum_{n=1}^{\infty} x_n^{*}(x_n), \quad\, (x_n) \in \Big(\oplus
\sum_{n=1}^{\infty} X_n\Big)_E.
\]
is  an isometrical isomorphism from $\big(\oplus \sum_{n=1}^{\infty}
X_n^{*}\big)_{E'}$ onto $\big(\big(\oplus\sum_{n=1}^{\infty} X_n)_{E}\big)^{*}$.
\end{theorem}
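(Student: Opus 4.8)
Write $Z := \big(\oplus \sum_{n=1}^{\infty} X_n\big)_E$ and $W := \big(\oplus \sum_{n=1}^{\infty} X_n^{*}\big)_{E'}$. The plan is to show separately that the assignment $\Psi\colon x^{*} \mapsto \phi_{x^{*}}$ is (i) a well-defined linear contraction from $W$ into $Z^{*}$, (ii) isometric, and (iii) surjective. The only facts about $E$ that will be used are the Hölder-type inequality $\sum_n a_n b_n \le \|(a_n)\|_E\,\|(b_n)\|_{E'}$ together with the sharpness built into the definition of $\|\cdot\|_{E'}$, and the order continuity of $E$, which is what makes the finitely supported vectors norm-dense in $Z$. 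For (i): given $x=(x_n)\in Z$ and $x^{*}=(x_n^{*})\in W$ one has $(\|x_n\|_{X_n})\in E$ with $E$-norm $\|x\|_Z$ and $(\|x_n^{*}\|_{X_n^{*}})\in E'$ with $E'$-norm $\|x^{*}\|_W$, so $\sum_n|x_n^{*}(x_n)|\le\sum_n\|x_n^{*}\|\,\|x_n\|\le\|x^{*}\|_W\,\|x\|_Z$; hence the defining series converges absolutely, $\phi_{x^{*}}\in Z^{*}$ with $\|\phi_{x^{*}}\|\le\|x^{*}\|_W$, and $\Psi$ is plainly linear.

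For (ii) I would fix $x^{*}=(x_n^{*})\in W$ and $0<\varepsilon<1$, and by the definition of $\|\cdot\|_{E'}$ choose $(a_n)\in B_E$ with $a_n\ge 0$ and $\sum_n a_n\|x_n^{*}\|>\|x^{*}\|_W-\varepsilon$. For each $n$ pick $x_n\in B_{X_n}$ and a scalar $\lambda_n$ with $|\lambda_n|=1$ and $\lambda_n x_n^{*}(x_n)>(1-\varepsilon)\|x_n^{*}\|$, and set $x:=(a_n\lambda_n x_n)$. Then $(\|a_n\lambda_n x_n\|)=(a_n\|x_n\|)\le(a_n)$, so by solidity $x\in Z$ with $\|x\|_Z\le\|(a_n)\|_E\le 1$, while $\phi_{x^{*}}(x)=\sum_n a_n\lambda_n x_n^{*}(x_n)\ge(1-\varepsilon)\sum_n a_n\|x_n^{*}\|>(1-\varepsilon)(\|x^{*}\|_W-\varepsilon)$. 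Letting $\varepsilon\to 0$ gives $\|\phi_{x^{*}}\|\ge\|x^{*}\|_W$, so together with (i) the map $\Psi$ is an isometry.

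For (iii), given $\phi\in Z^{*}$ let $j_n\colon X_n\to Z$ be the canonical embedding onto the $n$-th coordinate and set $x_n^{*}:=\phi\circ j_n\in X_n^{*}$. To see $(x_n^{*})\in W$ with $W$-norm at most $\|\phi\|$ it suffices, by the definition of $\|\cdot\|_{E'}$, to bound $\sum_n a_n\|x_n^{*}\|$ by $\|\phi\|$ for every $(a_n)\in B_E$ with $a_n\ge 0$: with $x_n\in B_{X_n}$ and unimodular $\lambda_n$ chosen so that $\lambda_n x_n^{*}(x_n)>(1-\varepsilon)\|x_n^{*}\|$, the finitely supported vector $x^{(N)}:=\sum_{n=1}^{N}j_n(a_n\lambda_n x_n)$ lies in $B_Z$ by solidity and satisfies $\phi(x^{(N)})=\sum_{n=1}^{N}a_n\lambda_n x_n^{*}(x_n)>(1-\varepsilon)\sum_{n=1}^{N}a_n\|x_n^{*}\|$, whence $\sum_{n=1}^{N}a_n\|x_n^{*}\|<\|\phi\|/(1-\varepsilon)$ for all $N$; letting $N\to\infty$ and then $\varepsilon\to 0$ gives the bound, so $x^{*}:=(x_n^{*})\in W$ with $\|x^{*}\|_W\le\|\phi\|$. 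Finally, since $E$ is order continuous and $(\|x_n\|)\in E$, one has $\big\|(0,\dots,0,\|x_{N+1}\|,\|x_{N+2}\|,\dots)\big\|_E\to 0$, i.e. the finitely supported vectors are dense in $Z$; on each such vector $\phi$ and $\phi_{x^{*}}$ both equal $\sum_{n\le N}x_n^{*}(x_n)$, so by continuity $\phi=\phi_{x^{*}}=\Psi(x^{*})$. Combining (i)--(iii), $\Psi$ is the desired surjective linear isometry.

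The step I expect to be the main obstacle is the passage in (iii) from the transparent description of $\phi$ on finitely supported elements to its action on all of $Z$: this is exactly where order continuity of $E$ is indispensable, since without it the finitely supported vectors need not be dense (for example when $E=\ell_\infty$) and the identification of $E^{*}$ with the Köthe dual $E'$ fails, so the statement as written would no longer be true. Everything else is routine once one has the Hölder inequality for $E$ and $E'$ and its sharpness, which is just the definition of the Köthe dual norm recalled above.
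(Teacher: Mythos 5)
Your proof is correct, and it is precisely the standard duality argument that the paper itself does not spell out: the authors only remark that the result is well known for $E=\ell_p$ and $E=c_0$ (citing Aliprantis--Burkinshaw) and that the general case is proved in the same way, which is exactly your scheme of H\"older-type contractivity, sharpness of the K\"othe dual norm for the isometry, and coordinate restrictions plus order-continuity-driven density of finitely supported vectors for surjectivity. The only cosmetic point is the choice of $x_n$ and $\lambda_n$ when $x_n^{*}=0$, where the strict inequality should be read as equality with $0$; this does not affect any estimate.
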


The following technical result will be useful.

\begin{lemma}
\label{norming} Let $E$ be a~Banach sequence  lattice which is
order continuous and $\{X_k: k \in \N\}$ be a sequence of
$($nontrivial$)$ Banach spaces. For each natural number $k$ assume
that $C_k\subset S_{X_k^{*}}$ is~a $1$-norming set for $X_k$. Then
the set $C$ given by
$$
C = \{ ( e_ k^* \lambda _k x_k^{*} ): e^{*} \in S_{E^\prime} ,  e^* \ge 0,  \lambda _k \in \K,
\vert \lambda_k \vert=1, x_k^{*} \in C_k,\forall k \in \N\}
$$
is a subset of $ S_{Z^*}$,  a~$1$-norming set for $Z$, where $\K$ is the scalar field and
$Z=\bigl( \oplus\sum_{k=1}^\infty X_k \bigr)_E$.
\end{lemma}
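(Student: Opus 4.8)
The plan is to transfer everything to the K\"othe dual lattice $E'$ via the duality identification of Theorem~\ref{th-dual}. Under that identification $Z^{*}$ is, isometrically, $\bigl(\oplus\sum_{k=1}^{\infty}X_k^{*}\bigr)_{E'}$, a sequence $(y_k^{*})$ with $(\|y_k^{*}\|_{X_k^{*}})\in E'$ being identified with the functional $z=(z_k)\mapsto\sum_{k=1}^{\infty}y_k^{*}(z_k)$ on $Z$ (the series converging absolutely, since $\sum_k\|y_k^{*}\|_{X_k^{*}}\,\|z_k\|_{X_k}\le\|(y_k^{*})\|_{E'}\,\|z\|_Z$ by H\"older's inequality for K\"othe duality). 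So, given $\phi=(e_k^{*}\lambda_k x_k^{*})\in C$, the facts $|\lambda_k|=1$ and $x_k^{*}\in S_{X_k^{*}}$ give $\|e_k^{*}\lambda_k x_k^{*}\|_{X_k^{*}}=e_k^{*}$ for every $k$ (recall $e^{*}\ge 0$); hence $(e_k^{*})\in E'$, so $\phi$ is a well-defined element of $Z^{*}$, and $\|\phi\|_{Z^{*}}=\|(e_k^{*})\|_{E'}=\|e^{*}\|_{E'}=1$. Thus $C\subset S_{Z^{*}}$.

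For the $1$-norming property only the inequality $\sup_{\phi\in C}|\phi(z)|\ge\|z\|$ needs proof, the reverse being immediate from $C\subset B_{Z^{*}}$; by homogeneity we may assume $z=(z_k)\in S_Z$, and we fix $\varepsilon>0$. Put $a:=(\|z_k\|_{X_k})$, so $a\in S_E$ and $a\ge 0$. Since $E$ is order continuous we have $E^{*}=E'$ (equivalently, apply Theorem~\ref{th-dual} with every $X_k=\K$), so by the Hahn--Banach theorem there is $e^{*}\in S_{E'}$ with $\sum_{k}e_k^{*}a_k=\|a\|_E=1$; replacing $e^{*}$ by $(|e_k^{*}|)$ --- which has the same $E'$-norm, by solidity, and, as $a\ge 0$, cannot decrease the pairing --- we may assume in addition that $e^{*}\ge 0$, so that $\sum_k e_k^{*}a_k=1$.

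Now I would exploit the convergence of this last series to truncate. Choose $N$ with $\sum_{k>N}e_k^{*}a_k<\varepsilon/2$. For each $k\le N$ use that $C_k$ is $1$-norming for $X_k$ to pick $x_k^{*}\in C_k$ with $e_k^{*}\bigl(a_k-|x_k^{*}(z_k)|\bigr)<\varepsilon/(2N)$ --- automatic when $e_k^{*}=0$ or $z_k=0$, and otherwise possible because $\sup_{x^{*}\in C_k}|x^{*}(z_k)|=\|z_k\|_{X_k}=a_k$; for $k>N$ pick any $x_k^{*}\in C_k$ (nonempty, as $X_k$ is nontrivial). Finally choose unimodular scalars $\lambda_k$ with $\lambda_k x_k^{*}(z_k)=|x_k^{*}(z_k)|$. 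Then $\phi:=(e_k^{*}\lambda_k x_k^{*})\in C$, and since $|x_k^{*}(z_k)|\le\|z_k\|_{X_k}=a_k$ for every $k$,
\[
\phi(z)=\sum_{k}e_k^{*}\,|x_k^{*}(z_k)|
=1-\sum_{k}e_k^{*}\bigl(a_k-|x_k^{*}(z_k)|\bigr)
\ge 1-\frac{\varepsilon}{2}-\sum_{k>N}e_k^{*}a_k>1-\varepsilon .
\]
As $\phi(z)\ge 0$, this gives $\sup_{\phi\in C}|\phi(z)|>1-\varepsilon$ for every $\varepsilon>0$, hence $\sup_{\phi\in C}|\phi(z)|\ge1=\|z\|$, as required.

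The only non-routine ingredient is the identity $E^{*}=E'$ for order-continuous $E$, used to produce the nonnegative norming functional $e^{*}$ on $E$; the rest is bookkeeping. The single point that needs care --- and the reason for truncating first --- is that the coordinates $(e_k^{*})$ of a vector of $E'$ are in general neither bounded nor summable, so one cannot control all the errors $e_k^{*}\bigl(a_k-|x_k^{*}(z_k)|\bigr)$ by one uniform choice; truncating via the convergence of $\sum_k e_k^{*}a_k$ confines the delicate estimate to finitely many coordinates, where the $1$-norming property of the individual sets $C_k$ suffices.
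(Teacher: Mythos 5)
Your proof is correct and follows essentially the same route as the paper: identify $Z^{*}$ with $\bigl(\oplus\sum_k X_k^{*}\bigr)_{E'}$ via Theorem~\ref{th-dual}, take a nonnegative norming functional $e^{*}\in S_{E'}$ for $(\Vert z_k\Vert)$ using $E^{*}=E'$, and then choose near-norming $x_k^{*}\in C_k$ together with unimodular rotations $\lambda_k$. The only (harmless) difference is bookkeeping: you truncate to finitely many coordinates using the convergence of $\sum_k e_k^{*}\Vert z_k\Vert$, while the paper handles all coordinates at once by allowing an error of $\varepsilon/\bigl((e_k^{*}+1)2^{k}\bigr)$ at the $k$-th coordinate.
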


\begin{proof}
By  Theorem \ref{th-dual} the set $C$ is contained in $S_{ Z^* }$. Let $ z=  (z_k) \in Z$ and $
\varepsilon>0$. By assumption we know that $( \Vert z_k\Vert ) \in E$. In view of Theorem
\ref{th-dual}, $E^* $ coincides with $E^\prime $, so there is a nonnegative  element $e^*\in S_
{E^\prime }$ such that $ e^* \bigl( (\Vert z_k\Vert) \bigr)= \Vert ( \Vert z_k\Vert ) \Vert_E =
\Vert z \Vert $. For each $k \in \N$,  $C_k$ is a~$1$-norming set for $X_k$ and so  there exists
$ z _k  ^* \in C _k $ and a~scalar $\lambda _k $ with $\vert \lambda _k \vert=1$ such that $\rea
\lambda_k z_k^* (z_k)> \Vert z_k \Vert - \dfrac{\varepsilon}{ (e_k^* +1) 2^k}$.     The element
$z^*= \bigl( e_k^* \lambda_k z_k^* \bigr) \in C $ and
$$
\rea  \; z^*(z)= \sum_ {k=1}^\infty \rea \; e_k^{*}\,\lambda_{k}\,
z_k^*(z_k)>\sum_ {k=1}^\infty e_k^* \Bigl( \Vert z_k \Vert -
\frac{\varepsilon}{(e_k^* +1) 2^k} \Bigr) \ge \Vert z \Vert -
\varepsilon .
$$
We proved that $C$ is a~$1$-norming set for $Z$.
\end{proof}

Now we are ready to prove the stability of the AHSp.

\begin{theorem}
\label{th-AHSP-sequence}
Let $E$ be a~Banach sequence lattice with the AHSp and such that it is uniformly monotone. Assume
that $\{ X_{k}: k \in \N\}$  has property AHp uniformly. Then the space $\big(\oplus
\sum_{k=1}^{\infty} X_k\big)_E$ has the AHSp.
\end{theorem}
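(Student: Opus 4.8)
The plan is to combine the AHSp of the sequence lattice $E$ with the uniform AHp of the family $\{X_k\}$, using the $1$-norming set $C$ constructed in Lemma \ref{norming} as the distinguished norming set witnessing AHp for the $E$-sum. First I would fix $0<\varepsilon<1$. Using that $E$ is uniformly monotone, pick a modulus $\delta_E(\cdot)$; using that $\{X_k\}$ has AHp uniformly, let $\delta$ be the common modulus and $\Upsilon_{X_k,\varepsilon_1}$ the associated selection maps for a small auxiliary $\varepsilon_1$; using that $E$ has the AHSp, let $\eta_E$ be the corresponding function. I would then set up a chain of small parameters $\eta_0 \ll \varepsilon_0 \ll r \ll s \ll \varepsilon_1 \ll \varepsilon$, the precise inequalities among them to be pinned down exactly as in the proof of Theorem \ref{th-estable} (this is the bookkeeping, not the substance).

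Next I would take a sequence $\{x_k\}=\{(x_k(n))_n\}$ in $S_Z$ with $Z=\big(\oplus\sum_n X_n\big)_E$ and a convex series $\sum_k\alpha_k$ with $\big\|\sum_k\alpha_k x_k\big\|>1-\eta_0$. Writing $w_k=(\|x_k(n)\|_{X_n})_n\in S_E$, monotonicity of the $E$-norm gives $\big\|\sum_k\alpha_k w_k\big\|_E\ge\big\|\sum_k\alpha_k x_k\big\|>1-\eta_0$. Applying the AHSp of $E$ to $\{w_k\}$ and $\sum_k\alpha_k$ produces a set $A\subset\N$ with $\sum_{k\in A}\alpha_k>1-\varepsilon_0$, a nonnegative $e^*\in S_{E'}$, and $\{\tilde w_k:k\in A\}\subset S_E^+$ with $e^*(\tilde w_k)=1$ and $\|\tilde w_k-w_k\|_E<\varepsilon_1$ for $k\in A$. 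By Theorem \ref{th-dual}, $e^*$ defines the scalar part of a functional on $Z$. Now for each $k\in A$ I would replace $x_k$ by the ``straightened'' vector $y_k=(y_k(n))_n$ with $\|y_k(n)\|=\tilde w_k(n)$ obtained by rescaling each coordinate of $x_k$ (using an arbitrary unit vector in $X_n$ when $x_k(n)=0$); then $\|y_k-x_k\|_Z\le\|\tilde w_k-w_k\|_E<\varepsilon_1$ and $\|y_k\|_Z=\|\tilde w_k\|_E=1$. Take $z^*\in S_{Z^*}$ with $\Rea z^*(\sum_{k\in A}\alpha_k y_k)>1-O(\varepsilon_0+\varepsilon_1)$; by Lemma \ref{norming} and the structure of $C$ we may take $z^*=(e_n^*\lambda_n x_n^*)$ with the same $e^*$ (since $e^*(\tilde w_k)=1$ already attains the norm on each $y_k$, the scalar weight is forced to be $e^*$), $x_n^*\in C_n$, $|\lambda_n|=1$. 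Then $\Rea z^*(y_k)=\sum_n e_n^*\,\Rea\lambda_n x_n^*(y_k(n))\le\sum_n e_n^*\|y_k(n)\|=e^*(\tilde w_k)=1$, so a Lemma \ref{elemental} argument over $n$ shows that for $k$ in a large-measure subset $B\subset A$, the ``most'' coordinates $n$ (in $e^*$-weight) satisfy $\Rea\lambda_n x_n^*(y_k(n))>(1-r)\|y_k(n)\|$, i.e.\ $\Rea\lambda_n x_n^*\big(\tfrac{y_k(n)}{\|y_k(n)\|}\big)>1-\delta(\varepsilon_1)$ whenever $\|y_k(n)\|$ is not too small.

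Finally, on those good coordinates I would apply the AHp of $X_n$ via $\Upsilon_{X_n,\varepsilon_1}$: since $\Rea(\lambda_n x_n^*)$ nearly attains its norm at $\tfrac{y_k(n)}{\|y_k(n)\|}$, there is a unit vector within $\varepsilon_1$ of it lying in $F(\Upsilon_{X_n,\varepsilon_1}(\lambda_n x_n^*))$; crucially, the selection map $\Upsilon_{X_n,\varepsilon_1}$ depends only on the functional $\lambda_n x_n^*$ and not on $k$, so the resulting new functional $\hat x_n^*$ is the same for all relevant $k$. Rescaling back by $\tilde w_k(n)$ and reassembling over $n$ (and padding the few bad coordinates — small in $e^*$-weight, hence cheap in $Z$-norm by a Lemma \ref{elemental}/monotonicity estimate), I get vectors $z_k\in S_Z$ with $\|z_k-x_k\|_Z<\varepsilon$ for $k$ in a subset of $B$ of $\alpha$-measure $>1-\varepsilon$, all annihilated up to scalar by the single functional $(e_n^*\hat x_n^*)\in S_{Z^*}$, which by Theorem \ref{th-dual} has norm one. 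This yields the AHSp for $Z$ via condition (d) of Proposition \ref{char-AHSP}. The main obstacle I anticipate is exactly this last reassembly: one must arrange that the per-coordinate AHp corrections are simultaneously compatible across all $k$ (handled by the $k$-independence of $\Upsilon_{X_n,\varepsilon}$) \emph{and} that the coordinates where $\|y_k(n)\|$ is too small to invoke AHp contribute negligibly — controlling the latter uniformly in $k$ is where uniform monotonicity of $E$, rather than mere order continuity, is needed.
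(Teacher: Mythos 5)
Your outline reproduces the paper's own proof almost step for step: pass to the scalar sequence of coordinate norms, apply the AHSp of $E$ to get the approximants $\tilde w_k$ and the functional $e^*\in S_{E'}$, straighten each $x_k$ into $y_k$ with coordinate norms $\tilde w_k(n)$, pick a near-norming functional for $\sum_k\alpha_k y_k$ from the product-type $1$-norming set of Lemma \ref{norming}, use Lemma \ref{elemental} to isolate a large set of indices $k$, apply AHp coordinatewise with the $k$-independent selections $\Upsilon_{X_n,\cdot}$, control the exceptional coordinates through uniform monotonicity, and finish with the functional assembled from $e^*$ and the corrected coordinate functionals via Theorem \ref{th-dual}. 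Two points, however, need repair. First, your parenthetical claim that the scalar part of the near-norming functional ``is forced to be $e^*$'' is unjustified and false as stated: near attainment of $\bigl\Vert\sum_k\alpha_k y_k\bigr\Vert$ by an element of the norming set does not determine its scalar weight, and the fact that $e^*(\tilde w_k)=1$ for each individual $k$ does not by itself imply that $e^*$ nearly norms the coordinate-norm vector of the convex combination (that would require an extra uniform-monotonicity argument). Fortunately the claim is also unnecessary: as in the paper, one simply lets the functional coming from Lemma \ref{norming} have whatever scalar part it has, uses it only to define the good coordinate sets and the uniform-monotonicity estimate, and reserves $e^*$ exclusively for the final functional.

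Second, the reassembly is glossed precisely where the paper's construction does real work. AHSp demands a norm-one functional taking the value $1$ \emph{exactly} at every corrected vector, so it is not enough that the padded coordinates are cheap in norm: on each coordinate $n$ outside the good set of a given $k$ you must place a vector of norm $\tilde w_k(n)$ at which the corrected functional $\hat x_n^*$ takes the value exactly $1$. The paper arranges this by reusing, on such a coordinate, the face point produced by AHp for the first index $k'$ for which that coordinate is good, and, on coordinates never touched by AHp, by first fixing a unit vector and then choosing the coordinate functional to attain its norm there. Without this device the assembled functional $(e_n^*\hat x_n^*)$ only \emph{nearly} attains $1$ at your vectors $z_k$, which does not meet condition (d) of Proposition \ref{char-AHSP}. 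With these two adjustments your argument coincides with the paper's proof.
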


\begin{proof}
We take $M= \{ k \in \N: X_k \ne 0\}$.  If $M$ is infinite, there is no loss of generality in assuming that
$M=\N$. Otherwise the proof of the statement is essentially the same but easier.

So  we  assume that $X_k \ne \{0\}$ for each $k$. We put $Z:= \big(\oplus \sum_{k=1}^{\infty} X_k \big)_E$.

Let us fix  $0 < \varepsilon < 1$. By assumption, $\{X_k: k \in \N \}$  has AHp uniformly, so there is
$\delta :(0,1) \llll (0,1)$ satisfying Definition \ref{def-AHP} for each $k \in \N$. We choose $0 < \eta <
\min \bigl\{ \frac{\varepsilon}{4}, \delta (\frac{\varepsilon}{4} )  \bigr\}$. Since $E$ is uniformly
monotone,  we can use condition ii) in \cite[Theorem 6]{HKM}, so there is $0 < \alpha <\varepsilon/4 < 1$
satisfying that
\begin{equation}
\label{um-E}
 e \in  S_E,  \sep e\ge 0, \sep A \subset \N,  \sep  \bigl \Vert e \chi _ A \bigr \Vert_E
> \frac{\varepsilon}{4} \ \Rightarrow \ \bigl \Vert e \chi _{ \N \backslash A } \bigr \Vert_E < 1 -
\alpha .
\end{equation}

For  $r= (1+ 2\eta - \alpha \eta)/(1+2\eta)$, we choose $0 < \varepsilon^\prime < (1-r)\varepsilon/ 3 $. Then
by  our assumption, it follows that there is $0 < \eta^\prime < \varepsilon^\prime$ such that $E$ satisfies
the statement (d) in Proposition \ref{char-AHSP} for $(\varepsilon^\prime, \eta^\prime).$

In order to prove that $Z$  satisfies the AHSp we will show that condition (d) in Proposition \ref{char-AHSP}
is satisfied for $(\varepsilon,\eta^\prime)$.

Assume that $\bigl( z_n \bigr) $ is a~sequence in $S_Z$ and $\sum \alpha _n$  is a~convex series such that
\linebreak[4]
$\displaystyle{ \biggl \Vert \sum_{n=1}^\infty \alpha _nz _n \biggr \Vert > 1 - \eta^\prime}$.

Then
\begin{eqnarray}
\label{1-eta}
\nonumber 1 - \eta^\prime & < & \biggl \Vert  \sum_{n=1}^\infty  \alpha _n z_n \biggr \Vert  \\
\nonumber & = &
\biggl \Vert  \biggl( \Bigl \Vert \sum _{n=1}^\infty \alpha _n  z_n(k) \Bigr \Vert \biggr)_{k} \biggr \Vert_E \\
& \le &
\biggl \Vert  \biggl(  \sum _{n=1}^\infty \alpha _n \bigl \Vert z_n(k) \bigr \Vert \biggr)_{k} \biggr \Vert_E\\
\nonumber & =  & \biggl \Vert   \sum _{n=1}^\infty \alpha _n  \Bigl(\bigl \Vert z_n(k) \bigr
\Vert \Bigr)_{k} \Bigr \Vert_E. \nonumber
\end{eqnarray}

Combining our hypothesis that $E$ has the AHSp with $\bigl(  \Vert z_n(k) \Vert \bigr)_k \in S_E$ for each
positive integer $n$, we conclude that there is a~finite subset $A \subset \N$ and $\{r_{n}: n \in A\}
\subset S_E$ such that
\begin{equation}
\label{sum-A-big}
\sum _{n \in A} \alpha _n > 1 - \varepsilon^\prime
\end{equation}
and also
\begin{equation}
\label{E-AHSP}
 r_n \ge 0,  \  \Vert r_n - \bigl( \Vert z_n (k) \Vert\bigr) _k \Vert_E < \varepsilon^ \prime
 \sep
\text{and there is} \sep r^* \in S_{ E^\prime} \,\,\text{ with }\,\, r^* (r_n) = 1, \, \text{ for
all \, } n\in A.
\end{equation}
Hence from \eqref{1-eta} and \eqref{sum-A-big} we obtain that
\begin{equation}
\label{sum-A-alfa-z}
1-\eta^\prime  - \varepsilon^ \prime < \biggl \Vert \sum _{ n \in A} \alpha _n z_n
\biggr \Vert .
\end{equation}

For each $k \in \N$ we choose an element $x_k \in S_{X_k}$ and define for every $n \in A$ the element $u_n$
in $Z$ given by
$$
u_n(k) = \left\{%
\begin{array}{ll}
    r_n(k) \dfrac{ z_n(k) }{ \Vert z_n(k) \Vert  } & \hbox{ if } z_n(k) \ne 0 \\
     &  \\
    r_n(k)x_k & \hbox{otherwise.} \\
\end{array}%
\right.
$$
By \eqref{E-AHSP} it is clearly satisfied that
\begin{equation}
\label{un-zn}
\bigl \Vert u_n - z_n \bigr \Vert =   \bigl \Vert r_n - \bigl( \Vert z_n (k) \Vert \bigr)_k \bigr
\Vert_E < \varepsilon ^\prime, \quad\,  \sem \forall n \in A.
\end{equation}

So in view of \eqref{sum-A-alfa-z} we obtain that
\begin{equation}
\label{sum-A-alfa-u}
1-\eta^\prime  - 2\varepsilon^ \prime < \biggl \Vert \sum _{ n \in A} \alpha _n u_n
\biggr \Vert .
\end{equation}

By assumption, $\{X_k: k \in \N \}$  has AHp uniformly.   For each $k \in \N$  let $G_k \subset S_{ X_{k}^*}$
be the $1$-norming set for $X_k$  satisfying Definition \ref{def-AHP}. We can also assume that  $G_k=
\{\lambda x^* : \lambda \in \K, \vert \lambda \vert=1, x^* \in G_k\}$ for each $k \in \N$. By Lemma
\ref{norming}  there is $z^* \in S_{Z^*}$ that can be written as $z^*\equiv \bigl( z_{k}^* \bigr)= \bigl(
e^*_k  x_{k}^*\bigr)$ where $e^* \in S_{E^\prime}$,$e^* \ge 0$  and $   x_{k}^* \in G_k$ for each $k \in \N$
satisfying that
$$
1- \eta ^\prime - 2\varepsilon^ \prime <  \Rea z^* \biggl( \sum _{ n \in A} \alpha
_n u_n \biggr).
$$

Now we define the set $C$ by $C=\bigl\{ n\in A : \rea z^* (u_n) > r \bigr\}$. By Lemma \ref{elemental} we
obtain that
\begin{equation}
\label{sum-C-big}
\sum _{n \in C} \alpha _n >  1 - \frac{ \eta^\prime  + 2 \varepsilon^\prime}{1-r} > 1 - \varepsilon > 0.
\end{equation}

For each element   $n \in C$ we have that
\begin{eqnarray}
\label{r-z*-un}
\nonumber r &< &   \Rea z^* \bigl(  u_n \bigr) =  \sum_{k=1}^\infty  \Rea   z_{k}^* (u_n(k))  \\
&\le & \sum_{k=1}^\infty  \bigl\vert   z_{k}^* (u_n(k))  \bigr\vert \\
\nonumber &\le & \sum_{k=1}^\infty  \bigl\Vert   z_{k}^* \bigr\Vert \bigr\Vert    u_n(k)  \bigr\Vert \\
\nonumber &\le &  \bigl \Vert  \bigl( \bigl \Vert  z_{k} ^* \bigr\Vert \bigr) \bigr\Vert
_{E^\prime} \bigl \Vert  \bigl( \bigl \Vert  u_n(k)  \bigr\Vert \bigr)_k \bigr\Vert _{E} \\
\nonumber&= &1.
\end{eqnarray}
For each  $n \in C$  and $k \in \N$ we  put
$$
d_n(k)= \Vert z_{k}^* \Vert \Vert u_n(k) \Vert - \Rea z_{k}^*  (u_n(k) ).
$$
The chain of inequalities  \eqref{r-z*-un}  implies that
\begin{equation}
\label{sum-dn-k} \sum _{k =1}^\infty d_n(k) \le 1-r , \sem \forall
 n \in C.
\end{equation}

We now fix a positive integer $k$. If $z_{k}^*=0$, then $d_n(k)=0$ for every $n \in C$. If $n \in C$ and
$u_n(k)=0$ for some $k \in \N$ then  $d_n(k)=0$. Otherwise it is satisfied that
\begin{equation}
\label{xk*-unk}
\Rea \frac{ z_{k}^*}{ \Vert  z_{k}^* \Vert } \biggl(  \frac{ u_n(k) }{ \Vert u_n(k)
\Vert }  \biggr)  = 1 -
 \frac{ d_n(k) } { \Vert z_{k}^* \Vert \Vert  u_n(k) \Vert }.
\end{equation}

In what follows, for each $n \in C$, we consider the following subset
$$
B_n = \bigl\{ k \in \N: d_n(k) <  \eta \Vert z_{k}^* \Vert \Vert
u_n(k) \Vert \bigr\}.
$$
By \eqref{r-z*-un}  we know that
\begin{eqnarray*}
\label{r-Bn}
 r &< &     \sum_{k=1}^\infty   \Vert z_{k}^*  \Vert \Vert u_n(k) \Vert \\
 &&   \\
&=&  \sum_{k\in B_n}  \Vert z_{k}^*  \Vert \Vert u_n(k) \Vert  +  \sum_{k\in \N \backslash B_n}  \Vert z_{k}^*  \Vert \Vert u_n(k) \Vert    \\
 & & \\
&\le & \sum_{k\in B_n}  \Vert z_{k}^*  \Vert \Vert u_n(k) \Vert  + \dfrac{1}{\eta}  \sum_{k\in \N \backslash B_n} d_n(k)     \\
& & \\
 &\le & \sum_{k\in B_n}  \Vert z_{k}^*  \Vert \Vert u_n(k) \Vert  + \dfrac{1}{\eta} (1-r) \sem \text{(by \eqref{sum-dn-k})} .
\end{eqnarray*}
As a consequence,
\begin{equation}
\label{sum-Bn-norms}
\sum_{k\in B_n}  \Vert z_{k}^*  \Vert \Vert
u_n(k) \Vert  > r- \frac{1-r}{\eta} > 0
\end{equation}
and  in view of \eqref{r-z*-un} we deduce that
\begin{equation}
\label{sum-N-Bn-norms}
 \sum_{k\in  \N \backslash B_n} \Vert z_{k}^* \Vert \Vert u_n(k) \Vert  <  1- r+
\frac{1-r}{\eta}, \sem \forall  n \in C.
\end{equation}

In view of \eqref{xk*-unk}, for every $n \in C$ and $k \in B_n$ it is satisfied that
$$
\Rea \; x_{k}^* \biggl( \frac{ u_n(k) }{ \Vert u_n(k)\Vert }
\biggr) = \Rea \frac{ z_{k}^* }{ \Vert   z_{k}^* \Vert  } \biggl(
\frac{ u_n(k) }{ \Vert u_n(k)\Vert } \biggr)=1 - \frac{d_n(k)}{
\Vert z_{k}^* \Vert \Vert  u_n(k) \Vert }
>  1 - \eta.
$$
Now we will use that for each $k$  the space $X_k$ has the property AHp for the function $\delta $,  $\eta <
\delta \bigl( \frac{\varepsilon }{4}\bigr) $ and  $ x_{k}^*  \in G_k$. Hence for each $k \in \cup _{l\in C}
B_l$, there is $y_{k}^* \in S_{X_{k}^*}$  such that if $n \in C $ and $k \in B_n$ there is $m_n(k) \in
S_{X_k}$ with
\begin{equation}
\label{X-P}
 \biggl \Vert m_n (k) - \frac{ u_n(k)}{ \Vert   u_n(k)
\Vert } \biggr \Vert < \frac{\varepsilon}{4}, \sem \text{and}\sem \Rea y_{k}^*
(m_n(k))=1, \sem  \forall n \in C, \sep \forall k  \in B_n.
\end{equation}
Let $D= \mathbb{N} \setminus \bigcup_{l \in C} B_l$. For each $k \in D$, we choose any element $y_{k}^* \in
S_{X_k^*}$ such that $y_{k}^* (x_k)=1$.

For each $n \in C$, we write $C_n= \bigcup_{l \in C} B_l
\backslash B_n$ and  define $v_n \in Z$ by
$$
v_n (k) = \left\{%
\begin{array}{ll}
r_n(k) m_n(k)  & \ \hbox{if}\ k \in B_n \\
r_n(k) m_{p(k)}(k)  & \ \hbox{if}\ k \in C_n \\
r_n(k)  x_k  & \ \hbox{if}\ k \in D, \\
\end{array}%
\right.
$$
where $p(k)= \min \bigl\{ s \in C: k \in B_s \bigr\}$ if $k \in \bigcup _{ l \in C} B_l.$ It is clear that
$\Vert v_n \Vert = \Vert r_n \Vert_E =1$ for each $n \in C$.

We clearly have that
\begin{eqnarray}
\label{un-Bn}
\nonumber
\Vert r_n \chi_{B_n} \Vert_E  &= &  \Vert u_n  \chi_{B_n} \Vert  \\
\nonumber
 &\ge &  \Rea z^* \bigl( u_n  \chi_{B_n}  \bigr) \\
\nonumber
 &= & \Rea     \sum_{k\in B_n }   z_{k}^*  \bigl( u_n(k) \bigr)  \\
\nonumber &= &     \sum_{k=1 }  ^\infty   \Rea   z_{k}^*  \bigl( u_n(k) \bigr)
-  \sum_{k\in \N \backslash B_n }   \Rea   z_{k}^*  \bigl( u_n(k) \bigr)  \\
&>&  r - \sum_{k\in \N \backslash B_n } \Rea   z_{k}^*  \bigl( u_n(k) \bigr) \sem \text{(by \eqref{r-z*-un})} \\
\nonumber
&\ge &  r - \sum_{k\in \N \backslash B_n }   \Vert    z_{k}^* \Vert \Vert u_n(k) \Vert   \\
\nonumber
&>&  r -   \Bigl(1-r + \frac{1-r}{\eta}\Bigr) \sem \text{(by \eqref{sum-N-Bn-norms})} \\
\nonumber &=&  2r -1 - \frac{1-r}{\eta} = 1 - \alpha.
\end{eqnarray}

Since $0\leq r_n $ for each $n \in C$ and $\{r_n:n \in C \} \subset  S_E$, from  \eqref{un-Bn} and
\eqref{um-E} it follows that
\begin{equation}
\label{rn-comp-Bn}
\bigl \Vert  r_n \chi_{ \N \backslash B_n } \bigr \Vert_E \le \frac{ \varepsilon}{4}.
\end{equation}
\noindent For every $n \in C$  and $k \in B_n$, in view of
\eqref{X-P} we have that
\begin{eqnarray}
\label{vn-un-k}
 \Vert v_n(k) - u_{n}(k) \Vert &= &  \  \bigl \Vert  r_n(k)
m_n(k)  -  u_{n}(k) \bigr \Vert \\
\nonumber &\le& \frac{\varepsilon}{4}  r_n(k).
\end{eqnarray}

Hence from \eqref{vn-un-k}, for every $n \in C$  we have that
\begin{eqnarray*}
\Vert v_n - u_{n}  \Vert &\le  &  \  \bigl \Vert \bigl( v_n - u_n \bigr) \chi_{B_n} \bigr \Vert +
  \bigl \Vert v_n \chi_{ \N \backslash B_n } \bigr \Vert+ \bigl \Vert u_n \chi_{ \N \backslash B_n } \bigr \Vert \\
&\le&   \frac{\varepsilon}{4} \bigl \Vert  r_n \bigr \Vert _ E +  2\bigl \Vert r_n \chi_{
\N \backslash B_n } \bigr \Vert_E    \sem \text{(by \eqref{vn-un-k})} \\
&\le &     \frac{3\varepsilon}{4}  \sem   \text{(by \eqref{rn-comp-Bn})}.
\end{eqnarray*}

\noindent
Combining with \eqref{un-zn}, we conclude that for each $n \in C$,
\begin{eqnarray*}
\Vert v_n - z_n  \Vert &\le  &  \  \Vert   v_n - u_n  \Vert +  \Vert u_n  - z_n \Vert \\
&\le&   \frac{3 \varepsilon}{4} +  \varepsilon^\prime   \\
&<&  \varepsilon .
\end{eqnarray*}

Let  $v^* $ be the element in $ Z^*$ given by $v^*= \bigl\{ r_k^{*} y_{k}^* \bigr\} $. By Theorem
\ref{th-dual} it is satisfied that
\linebreak[4]
 $ \Vert v^* \Vert = \Vert r^* \Vert_{E^\prime }=1.$ For  each $n \in C$ we clearly have that
\begin{eqnarray*}
v^* (v_n) &=  &  \sum_{k=1}^\infty
 r_{k}^{*}\,y_{k}^* (v_n(k))   \\
&= &   \sum_{k\in B_n} r_{k}^{*}\,r_n(k)\,y_{k}^* (m_n(k)) +
\sum_{k\in C_n} r_{k}^{*} r_n(k) y_{k}^{*} (m_{p(k)}(k))+
\sum_{k\in D} r_{k}^{*} r_n(k) y_{k}^{*}(x_k)        \\
&= &    \sum_{k=1} ^\infty    r_{k}^{*} r_n(k)
\sem  \text{(by \eqref{X-P})} \\
&= &  r^{*}(r_n) =1 \sem  \text{(by \eqref{E-AHSP})}.
\end{eqnarray*}
From \eqref{sum-C-big} we also know that $\sum _{n \in C} \alpha _n >   1 - \varepsilon$, so the proof is
finished.
\end{proof}

As we mentioned above uniformly convex spaces have AHp. Indeed in this case the modulus  of convexity  plays
the role of the function $\delta $  satisfying Definition \ref{def-AHP} and the identity function on the unit
sphere of the dual plays the role of the function $\Upsilon_\delta $ \cite[Lemma 2.1]{ABGM-tams}. So a family
$\{ X_i:i \in I\}$ of uniformly convex Banach spaces has the AHp uniformly in case that $\inf \{ \delta _i
(\varepsilon) : i \in I\} > 0$, for any $\varepsilon > 0$,  being $\delta _i$ the modulus of convexity of
$X_i$. Also $C(K)$ spaces and $L_1 (\mu)$ have AHp uniformly  for any compact Hausdorff space $K$ and any
measure $\mu$ \cite[Corollary 2.8]{CKLM}. As a consequence of Theorem \ref{th-AHSP-sequence} and
\cite[Theorem 4.1]{AAGM1} we deduce, for instance, the following result.

\begin{corollary}
\label{cor-sums}
Let $\{ X_k: k \in \N \}$ be a sequence of (nontrivial) Banach spaces  such that any of them is either a
uniformly convex space or  $C(K)$ (some compact $K$) or $L_1(\mu)$ (some measure $\mu$). Let  $A= \{ k \in
\N: X_k \ \text{\rm is a uniformly convex space} \} $ and assume that $\inf \{ \delta _k(\varepsilon):\,k \in
A\} > 0$ for every $\varepsilon > 0$,  being $\delta _k$ the modulus of convexity of $X_k$.  Then the pair
$\big(\ell_1, \big(\oplus \sum_{k=1}^{\infty} X_k\big)_{\ell_p} \big)$ satisfies the BPBp for every $1 \le p
< \infty$.
\end{corollary}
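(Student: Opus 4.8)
The plan is to deduce the corollary directly from Theorem~\ref{th-AHSP-sequence} together with the characterization in \cite[Theorem 4.1]{AAGM1}, according to which the pair $(\ell_1,Y)$ has the BPBp if and only if $Y$ has the AHSp. Thus it is enough to show that $Z:=\big(\oplus\sum_{k=1}^{\infty}X_k\big)_{\ell_p}$ has the AHSp, and I would obtain this by applying Theorem~\ref{th-AHSP-sequence} with the Banach sequence lattice $E=\ell_p$.

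First I would check that $E=\ell_p$ meets the hypotheses imposed on the lattice. For $1<p<\infty$ the space $\ell_p$ is uniformly convex, hence has the AHp and therefore the AHSp; for $p=1$ we have $\ell_1=L_1(\mu)$ with $\mu$ the counting measure, so $\ell_1$ has the AHp and hence the AHSp as well. Moreover, for every $1\le p<\infty$ the space $\ell_p$ is uniformly monotone: if $x,y\ge 0$ then $(x_k+y_k)^p\ge x_k^p+y_k^p$ for each $k$ by convexity of $t\mapsto t^p$ when $p\ge 1$, so $\|x+y\|_p^p\ge\|x\|_p^p+\|y\|_p^p$, and if $x\in S_{\ell_p}$ and $\|x+y\|_p\le 1+\delta$ then $\|y\|_p^p\le(1+\delta)^p-1$, which is small for small $\delta$ (for $p=1$ this is immediate from $\|x+y\|_1=\|x\|_1+\|y\|_1$).

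Next I would verify that $\{X_k:k\in\N\}$ has the AHp uniformly. For $k\in A$, the space $X_k$ is uniformly convex and, as recalled in the text (see \cite[Lemma 2.1]{ABGM-tams}), it has the AHp with its modulus of convexity $\delta_k$ in the role of the function $\delta$ and the identity on $S_{X_k^*}$ in the role of $\Upsilon$; by hypothesis $\delta_A(\varepsilon):=\inf\{\delta_k(\varepsilon):k\in A\}>0$ for every $\varepsilon>0$, so all these summands share the single admissible function $\delta_A$. For $k\notin A$, $X_k$ is some $C(K)$ or some $L_1(\mu)$, and by \cite[Corollary 2.8]{CKLM} the class of all such spaces has the AHp with one universal function $\delta_B$. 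Putting $\delta:=\min\{\delta_A,\delta_B\}$ taken pointwise, every $X_k$ has the AHp with this common $\delta$, i.e. $\{X_k:k\in\N\}$ has the AHp uniformly. Theorem~\ref{th-AHSP-sequence} then yields that $Z$ has the AHSp, and \cite[Theorem 4.1]{AAGM1} gives that $(\ell_1,Z)$ has the BPBp for every $1\le p<\infty$.

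I would expect the only genuinely delicate point to be making the uniformity of the AHp explicit for this mixed family of summands, namely merging the modulus-of-convexity functions of the uniformly convex $X_k$ (which is exactly where the $\inf$ hypothesis is used) with the universal function of \cite[Corollary 2.8]{CKLM} for the $C(K)$ and $L_1(\mu)$ summands into a single function $\delta$ admissible for Definition~\ref{def-AHP}; once that bookkeeping is done, the corollary is a direct consequence of the results already established.
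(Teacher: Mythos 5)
Your proposal is correct and follows essentially the same route as the paper: the corollary is deduced there, exactly as you do, by noting that $\ell_p$ ($1\le p<\infty$) is a uniformly monotone Banach sequence lattice with the AHSp, that the mixed family $\{X_k\}$ has the AHp uniformly (the modulus-of-convexity infimum for the uniformly convex summands together with the universal function of \cite[Corollary 2.8]{CKLM} for the $C(K)$ and $L_1(\mu)$ summands), and then applying Theorem \ref{th-AHSP-sequence} and \cite[Theorem 4.1]{AAGM1}. Your explicit merging of the two admissible functions via a pointwise minimum is exactly the bookkeeping the paper leaves implicit, and it is valid since decreasing $\delta$ only strengthens the hypothesis in Definition \ref{def-AHP}.
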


Let us remark that in general AHSp is not stable under infinite $\ell_1$-sums (see \cite[Corollary
4.6]{ACKLM}). So in order  to have the stability result  in Theorem \ref{th-AHSP-sequence} some additional
restriction is needed.  Now we show the following  partial converse of Theorem \ref{th-AHSP-sequence} that
extends to some infinite sums the necessary condition  obtained in Theorem \ref{th-estable}.

\begin{proposition}
\label{th-estable-E}
Let $\{ X_k: k \in \N \}$ be a sequence of (nontrivial) Banach spaces  and $E$ be an order
continuous Banach sequence lattice.  Assume that the space $Z=\big(\oplus \sum_{k=1}^{\infty} X_k
\big)_E$ has the approximate hyperplane series property. Then there is a function $\tilde {\eta}:
(0,1) \to (0,1)$ such that $X_k$ satisfies the approximate hyperplane series property with the
function $\tilde{\eta}$ for every $k \in \N$.  More precisely, one can take the function given by
$\tilde{\eta} \bigl( \varepsilon \bigr)= \eta \bigl( \frac{\varepsilon}{2} \bigr)$, where $\eta$
is the function satisfying Definition {\rm{\ref{def-AHSP}}} for $Z$.
\end{proposition}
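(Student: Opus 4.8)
The plan is to realize each $X_{k_0}$, for an arbitrary fixed $k_0\in\N$, as an isometric, $1$-complemented subspace of $Z$ and to pull the AHSp of $Z$ down to it. Let $W\subset Z$ be the subspace of those sequences that are supported on the single coordinate $k_0$; after the usual normalization $\|e_k\|_E=1$ this subspace is isometric to $X_{k_0}$, and the coordinate projection $\pi\colon Z\to W$ satisfies $\|\pi\|=\|I-\pi\|=1$ by solidity of $E$. I would check that $\tilde\eta(\varepsilon)=\eta(\varepsilon/2)$ works: given a sequence $\{x_n\}\subset S_{X_{k_0}}$ and a convex series $\sum\alpha_n$ with $\|\sum\alpha_n x_n\|>1-\tilde\eta(\varepsilon)=1-\eta(\varepsilon/2)$, view the $x_n$ inside $W$ as vectors $y_n\in S_W\subset S_Z$, so that $\|\sum\alpha_n y_n\|_Z>1-\eta(\varepsilon/2)$. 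Applying the AHSp of $Z$ with parameter $\varepsilon/2$ produces a subset $A\subset\N$ with $\sum_{n\in A}\alpha_n>1-\varepsilon/2$, vectors $\zeta_n\in S_Z$ with $\|\zeta_n-y_n\|_Z<\varepsilon/2$ for $n\in A$, and a functional $z^*\in S_{Z^*}$ with $z^*(\zeta_n)=1$ for all $n\in A$.

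Next I would localize everything to coordinate $k_0$. Since $\|\zeta_n(k_0)-x_n\|_{X_{k_0}}\le\|\zeta_n-y_n\|_Z<\varepsilon/2$ by solidity, each $\zeta_n(k_0)$ is nonzero and $\|\zeta_n(k_0)\|>1-\varepsilon/2$. Using Theorem \ref{th-dual} (this is where order continuity of $E$ enters) write $z^*=(z_k^*)\in\big(\oplus\sum_k X_k^*\big)_{E'}$, so that $z^*(\zeta_n)=\sum_k z_k^*(\zeta_n(k))$ and $\|z^*\|_{Z^*}=\big\|(\|z_k^*\|)\big\|_{E'}=1$. For each $n\in A$ the estimate
\[
1=z^*(\zeta_n)\le\sum_{k}\big|z_k^*(\zeta_n(k))\big|\le\sum_k\|z_k^*\|\,\|\zeta_n(k)\|\le\big\|(\|z_k^*\|)\big\|_{E'}\,\big\|(\|\zeta_n(k)\|)\big\|_{E}=1
\]
is a chain of equalities, so term by term $z_k^*(\zeta_n(k))=\|z_k^*\|\,\|\zeta_n(k)\|$; in particular $z_{k_0}^*(\zeta_n(k_0))=\|z_{k_0}^*\|\,\|\zeta_n(k_0)\|$. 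Moreover $z_{k_0}^*\neq0$: otherwise $z^*(\pi\zeta_n)=z_{k_0}^*(\zeta_n(k_0))=0$, hence $1=z^*(\zeta_n)=z^*\big((I-\pi)\zeta_n\big)$, and since $(I-\pi)\zeta_n=(I-\pi)(\zeta_n-y_n)$ we get $1\le\|(I-\pi)(\zeta_n-y_n)\|_Z\le\|\zeta_n-y_n\|_Z<\varepsilon/2<1$, a contradiction.

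Finally, for $n\in A$ I would set
\[
z_n:=\frac{\zeta_n(k_0)}{\|\zeta_n(k_0)\|}\in S_{X_{k_0}}\qquad\text{and}\qquad x^*:=\frac{z_{k_0}^*}{\|z_{k_0}^*\|}\in S_{X_{k_0}^*}.
\]
Then $x^*(z_n)=z_{k_0}^*(\zeta_n(k_0))\big/\big(\|z_{k_0}^*\|\,\|\zeta_n(k_0)\|\big)=1$ for every $n\in A$, and
\[
\|z_n-x_n\|\le\big|1-\|\zeta_n(k_0)\|\big|+\|\zeta_n(k_0)-x_n\|<\tfrac{\varepsilon}{2}+\tfrac{\varepsilon}{2}=\varepsilon ,
\]
while $\sum_{n\in A}\alpha_n>1-\varepsilon/2>1-\varepsilon$. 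Thus $A$, $\{z_n:n\in A\}$ and $x^*$ witness Definition \ref{def-AHSP} for $X_{k_0}$ at level $\varepsilon$ with threshold $\tilde\eta(\varepsilon)=\eta(\varepsilon/2)$, and since $k_0$ was arbitrary the proof is complete. The one genuinely delicate point is this localization step: extracting from the single norming functional $z^*$ on $Z$ a norming functional on $X_{k_0}$ that attains the value $1$ \emph{exactly} at the normalized coordinate vectors $z_n$. This is why the precise description of $Z^*$ from Theorem \ref{th-dual} and the equality case of Hölder's inequality in the $E$–$E'$ duality are essential, and why the softer bound $\Rea z^*(\pi\zeta_n)>1-\varepsilon/2$ alone would not suffice for condition $(3)$ of Definition \ref{def-AHSP}.
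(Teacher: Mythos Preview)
Your proposal is correct and follows essentially the same route as the paper: embed $X_{k_0}$ as the $k_0$-th coordinate subspace of $Z$, apply the AHSp of $Z$ at level $\varepsilon/2$, use Theorem~\ref{th-dual} together with the equality case in the $E$--$E'$ duality to localize the norming functional to coordinate $k_0$, and then normalize. The only cosmetic differences are that the paper keeps track of the constant $\|e_1\|_E$ explicitly instead of normalizing it to $1$, and it shows $z^{*}(1)\neq 0$ by a direct lower bound on $|z^{*}(1)(z_n(1))|$ rather than via the projection $(I-\pi)$.
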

\begin{proof}
It suffices to prove that  $X_1$ has the property AHSp for $\tilde{\eta}$. Consider the subspace
$Z_1$ of $Z$ given by
$$
Z_1 =\{ z \in Z  : z(k)=0, \forall k  \ge 2 \}.
$$
Notice that the mapping  from $Z_1$ into $X_1$ given by  $ z \mapsto z(1) \Vert e_{1} \Vert _E $ is a linear
isometry, where $e_1$ is the sequence given by $e_{1}(k)= \delta _{1}^k$ for each natural number $k$. Since
AHSp is clearly preserved by linear isometries (and the function $\eta $ satisfying AHSp also) then it
suffices to prove that  $Z_1$ satisfies AHSp with the function $\tilde{\eta}$.

So let us fix $0 < \varepsilon < 1$. Assume that   $\alpha_n \ge 0$, $u_n \in S_{Z_1}$ for every
$n$,  $\sum _{n=1}^\infty \alpha_n=1 $ and it is also satisfied that
$$
\Bigl  \Vert \sum _{n=1}^\infty \alpha _n u_n \Bigr \Vert > 1 - \eta \Bigl(
\frac{\varepsilon}{2} \Bigr) .
$$
By assumption $Z$ has the AHSp, so  there is a subset $A \subset \N$ such that $\sum _{n\in A} \alpha _n > 1
- \frac{\varepsilon}{2} > 1 - \varepsilon$, $z^* \in S_{Z^*}$  and $ \{z_n: n \in A \} \subset S_{Z} $ such
that
\begin{equation}
\label{zn-un}
\Vert  z_n - u_n \Vert < \frac{\varepsilon}{2}  \sem \text{and} \sem   z^* (z_n)= 1, \seg \forall n \in A.
\end{equation}
For every $n \in A $ we define the element $y_n\in Z_1 $ given by
$$
y_n (1)= z_n(1), \sem y_n (k)=0, \sem \forall k \ge 2.
$$
Let us fix $n \in A$. We clearly have that
\begin{equation}
\label{yn-un}
 \Vert y_n - u_n \Vert = \bigl\Vert \bigl( \Vert y_n(k)- u_n (k)\Vert \bigr)   \bigr \Vert _E \le
 \bigl\Vert \bigl(  \Vert z_n(k)- u_n (k)\Vert \bigr) \bigr\Vert _E = \Vert z_n - u_n \Vert  <
 \frac{\varepsilon}{2}.
\end{equation}
Since we  know that
$$
\Vert y_n  \Vert  \le \Vert z_n \Vert =  1, \sem \forall n \in A,
$$
in view of \eqref{yn-un} we deduce that
\begin{equation}
\label{norm-yn}
1- \frac{\varepsilon }{2} \le \Vert y_n  \Vert  \le 1, \sem \forall n \in A.
\end{equation}
As a consequence  of Theorem \ref{th-dual} we know that $z^*  \in \big(\oplus
\sum_{k=1}^{\infty}X_k^{*}\big)_{E'} $  and we also   have
\begin{equation}
\label{z*-yn-1}
z^*(1)  ( y_n (1) ) = z^*(1)  ( z_n (1) ) =     \Vert z^*(1)  \Vert\; \Vert  z_n(1) \Vert= \Vert z^*(1)
\Vert\; \Vert  y_n(1) \Vert, \sem \forall n \in A.
\end{equation}

On the other hand, it is satisfied that
\begin{eqnarray}
\label{z*1-n-0}
\vert z^* (1)( y_n (1) ) \vert  &=&   \vert z^* (y_n)\vert  \nonumber \\
&\ge &  \vert z^* (z_n)\vert -  \vert z^* (y_n-z_n)\vert  \nonumber \\
&\ge & 1-  \Vert z_n- y_n \Vert  \nonumber\\
&\ge &  1-  \Vert z_n- u_n \Vert -  \Vert u_n- y_n \Vert \\
\nonumber
&\ge &  1-  2\Vert z_n- u_n \Vert \sem \text{ (by \eqref{yn-un})} \\
\nonumber
 & >  &   1-  \varepsilon > 0 \sem  \text{ (by \eqref{zn-un})}.
\end{eqnarray}

We denote by $w^*$ the element in $Z^*$ given by
$$
w^* (1)= z^*(1), \sem w^* (k)= 0, \sem  \text{if} \sep  k \ge 2 .
$$

Notice that $\Vert e_1 \Vert _{E^\prime}  \Vert e_1 \Vert _{E}=1$.  So it is clearly satisfied
\begin{eqnarray*}
\Rea w^*  (y_n) &  =  &  \Rea z^*  (y_n)  \\
& = &  \Vert z^*(1) \Vert\; \Vert  y_n(1) \Vert \sem \text{(by  \eqref{z*-yn-1})} \\
 & = &  \frac{\Vert w^* \Vert}{ \Vert e_1 \Vert_{E^\prime} } \frac{\Vert y_n
\Vert}{\Vert e_1 \Vert_E } \\
& = & \Vert w^* \Vert \Vert y_n \Vert,
\end{eqnarray*}
and bearing in mind  \eqref{z*1-n-0} we deduce that $w^*  (y_n) \ne 0$.

Since for each $n \in A$ we have also that
\begin{eqnarray*}
\Bigl\Vert u_n - \frac{y_n }{\Vert y_n \Vert } \Bigr\Vert &  \le  &  \Vert u_n - y_n \Vert +
\Bigl\Vert  y_n - \frac{y_n }{\Vert y_n \Vert } \Bigl\Vert \\
&  < &   \frac{\varepsilon}{2} + 1- \Vert y_n \Vert
\le  \varepsilon \sem \text{(by
\eqref{yn-un}  and \eqref{norm-yn})},
\end{eqnarray*}
we checked that $Z_1$ has the AHSp for the function $\tilde{\eta}$ as we wanted to show.
\end{proof}

\vspace{3mm}

{\bf Acknowledgements.}
	The third author is grateful to the Office of Graduate Studies of the University of Isfahan for their support.  The authors thank  the referee for providing the reference \cite{Ga}.

\end{document}